\newtheorem{theorem}{Theorem}[section]
\newtheorem{prop}{Proposition}[theorem]
\newtheorem{lemma}[theorem]{Lemma}
\newtheorem{dfn}{Definition}
\newcommand{\esp}[2]{\mathbb{E}_{#1} \left( #2 \right)}
\newcommand{\espc}[2]{\mathbb{E}_{#1} \left[ #2 \right]}
\title{Classification and estimation in the Stochastic Block Model based on the empirical degrees}
\author{\textsc{Antoine Channarond}\\
\textsc{Jean-Jacques Daudin} \\
\textsc{and St\'ephane Robin} \\
\\
\textit{Département de Math\'ematiques et Informatique Appliqu\'ees} \\
\textit{UMR 518 AgroParisTech/INRA} \\
\textit{16 rue Claude Bernard, 75231 Paris Cedex 5, France} \\
\\
{\fontfamily{cmbr}\fontseries{m}\selectfont channarond,daudin,robin@agroparistech.fr } }
\date{}
\begin{document}

\maketitle

\begin{abstract}
The Stochastic Block Model \citep{holland1983stochastic} is a mixture model for heterogeneous network data. Unlike the usual statistical framework, new nodes give additional information about the previous ones in this model. Thereby the distribution of the degrees concentrates in points conditionally on the node class. We show under a mild assumption that classification, estimation and model selection can actually be achieved with no more than the empirical degree data. We provide an algorithm able to process very large networks and consistent estimators based on it. In particular, we prove a bound of the probability of misclassification of at least one node, including when the number of classes grows.
\end{abstract}

\section{Introduction}
Strong attention has recently been paid to network models in many domains such as social sciences, biology or computer science. Networks are used to represent pairwise interactions between entities. For example, sociologists are interested in observing friendships, calls and collaboration between people, companies or countries. Genomicists wonder which gene regulates which other. But the most famous examples are undoubtedly the Internet, where data traffic involves millions of routers or computers, and the World Wide Web, containing millions of pages connected by hyperlinks. A lot of other examples of real-world networks are empirically treated in \citet{albert2002statistical}, and book \citet{wasserman1994social} gives a general introduction to mathematical modelling of networks, and especially to graph theory.


One of the main features expected from graph models is inhomogeneity. Some articles, e.g. \citet{bollobas2007phase} or \citet{van2009random}, address this question. In the Erd\H{o}s-Rényi model introduced by \citet{erdos1959random} and \citet{gilbert1959random}, all nodes play the same role, while most real-world networks are definitely not homogeneous.

In this paper, we are interested in the Stochastic Blockmodel (SBM), introduced by \citet{holland1983stochastic} and inspired by \citet{holland1981exponential} and \citet{fienberg1981categorical}. This model assumes discrete inhomogeneity in the underlying social structure of the observed population: $n$ nodes are split into $Q$ homogeneous classes, called blocks, or more generally clusters. Then it is assumed that the distribution of the edge between two nodes, depends only on the blocks to which they belong. Thereby, within each class, all nodes have the same connection behavior: they are said to be structurally equivalent \citep{lorrain1971structural}. When the class assignment is known, the social structure can possibly be visualized through the meta-graph \citep{picard2009deciphering}, which emphasizes the role of each class. However the block structure is supposed to be not observed or \emph{latent}. Thus the assignment $Z$ and the model parameters must be estimated \emph{a posteriori} through the observed graph $X$, which is a real challenge, especially in large networks. 

 Our main purpose in this paper is to present a consistent inference method under SBM, which can above all process very large graphs. \citet{snijders1997estimation} have proposed a maximum likelihood estimate based on the EM algorithm for very small graphs with $Q=2$ blocks. They have also proposed a Bayesian approach based on Gibbs sampling for larger graphs (hundreds of nodes), which they have extended to arbitrary block numbers in \citet{nowicki2001estimation}. However the usual techniques enables the processing of only relatively small graphs, because they suffer severely from graph intricacy. In particular the EM algorithm deals with the conditional distribution of the labels $Z$ given the observations $X$, whose dependency graph is actually a clique in the case of SBM (see paragraph 5.1 in \citet{daudin2008mixture}). Inspired by \citet{wainwright2008graphical}, \citet{daudin2008mixture} have developed approximate methods using variational techniques in the context of SBM. From a physical point of view, the variational paradigm amounts to mean-field approximation, see \citet{jaakkola2000tutorial}. Thus thousands of nodes can be processed with this variational EM algorithm. Lastly, \citet{celisse2011consistency} proves the variational method to be consistent precisely under SBM.

All previous methods treat both classification and parameter estimation directly and at the same time. They are alternatively updated at each step of EM-based algorithms. Yet those tasks are actually not symmetrical, and moreover estimators are quite simple when $Z$ is known. The classification ---~remaining the main pitfall thus far~--- can be completed first, and then the latent assignment $Z$ just replaced with this classification by plug-in in order to estimate the parameters.

Searching for clusters from a graph is computationally difficult and has different meanings. Many algorithms, especially coming from physics and computer science, aim at detecting highly connected clusters, which are self-defined as optimizing some objective function, see \citet{fortunato2009detecting} and \citet{girvan2002community}. In contrast, the blocks under SBM have a model-based definition and do not necessarily have many inner connections (see examples in \citet{daudin2008mixture}). Therefore, most algorithms designed for community detection are generally not suitable in this context.

\citet{bickel2009nonparametric}, \citet{choi2010stochastic}, \citet{celisse2011consistency} and \citet{rohe2010spectral} prove that it is asymptotically possible to uncover the latent structure of the graph $Z$. In this work, we additionally show under a mild assumption that it is possible to do so, just by utilizing degree data instead of the whole graph $X$. As a consequence, we can work with $n$ variables instead of $n^2$, which makes classification computations much faster. The basic reason why so little information is needed ---~compared with other models with latent structure~--- is specific to SBM. The number of observed variables $(X_{ij})_{1\leq i,j\leq n}$ grows faster than the number of latent variables $Z$, therefore even marginal distributions of $X$ concentrate very fast. Our algorithm actually expands the procedure introduced by \citet{snijders1997estimation} when $Q=2$. Like \citet{bickel2009nonparametric}, we provide probabilistic bounds for the occurrence of one error at least. Moreover we take the random assignment into account, even when the number of classes $Q$ increases and the average degree vanishes. Related results are given in \citet{choi2010stochastic} and \citet{rohe2010spectral}. Nevertheless the bounds concern the rate of misclassified nodes instead, and do not prevent the number of errors from growing to infinity as fast as the square root of $n$ for instance. They also require the assignment $Z$ to be fixed.




The paper is organized as follows. In Section \ref{model}, we begin by presenting the model we shall study and some notations are fixed. Above all a concentration property of the degree distribution is stated in paragraph \ref{degreesdist}, which will be very useful in proving the consistency of the method mentioned above. The classification algorithm (called LG) and the main results are presented in this section as well. In particular, Theorem \ref{algoconsistency} provides a bound of the error probability and Proposition \ref{classifrates} gives some convergence rates when the number of classes is allowed to grow. The consistency proof of the LG algorithm is provided in Section \ref{proofs}. Section \ref{plugin} is devoted to deriving simple estimators of the parameters by plug-in and their consistency is also demonstrated. A simulation study in Section \ref{simu} illustrates the behavior of the LG algorithm, which is discussed afterwards. In Section \ref{selection}, the model and the algorithm are more accurately studied. As an application, it is lastly proved that it is likewise possible to find out asymptotically the right number $Q$ of blocks of the model. That completes the method relying just on degrees.

\section{The Stochastic Block Model\label{model}}
\subsection{Model}
We first recall the SBM model. For all integers $n\geq 1$, $[n]$ denotes the set $\{1,\dots,n\}$. The undirected binary graphs with $n$ nodes are defined by the pair $([n],X)$ where $X$ is a symmetric binary square matrix of size $n$. $X$ is called the adjacency matrix of the graph. Let $Q\geq 1$ be the number of blocks.
\begin{itemize}
 \item $Z=(Z_i)_{i\in [n]}$ denotes the \emph{latent} vector of $[Q]^n$ such that $Z_i=q$ if the node $i$ is $q$-labeled. Let $\alpha=(\alpha_1,\dots,\alpha_Q)$ be the vector of the block proportions in the whole population.
\[ Z=(Z_i)_i \text{ i.i.d.} \sim \mathcal{M}(1;\alpha) \]
\item Conditionally on the labels $Z$, the variables $\{X_{ij},i,j\in [n]\}$ are independent Bernoulli variables. Conditionally on $\{Z_i=q,Z_j=r\}$, the parameter of $X_{ij}$ is $\pi_{qr}$.
\[ (X_{ij}|Z_i=q,Z_j=r) \sim \mathcal{B}(\pi_{qr}) \]
\end{itemize}
$\pi_{qr}$ is the connection probability between any $q$-labeled node and any $r$-labeled node. Noting $\pi=(\pi_{qr})_{q,r\in [Q]}$ the connection matrix, the parameters of the model are $(\alpha,\pi)$. This model will be denoted by $\mathcal{G}(n,\pi,\alpha)$. Note that in the sequel $n$ will be often removed in the notations for the sake of simplicity.



This is a classical problem in mixture models: the block labeling is naturally not identifiable. The content of the blocks remains unchanged by permutating labels. But equivalence classes are identifiable as soon as $n\geq 2Q$, see \citet{celisse2011consistency}.

\subsection{Degree distribution} \label{degreesdist}

For all $i\in [n]$, let $D_i^n=\sum_{j\neq i} X_{ij}$ the degree of the node $i$, that is the number of neighbors of this node.

\begin{prop}
For all $q\in [Q]$, let $\overline{\pi}_q=\sum_{r\in[Q]} \alpha_l\pi_{qr}$. $D^n_i$ is a binomial distributed random variable conditionally on $Z_i=q$ with parameters $(n-1,\overline{\pi}_q)$.
\end{prop}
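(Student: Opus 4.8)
The plan is to compute the conditional distribution of $D_i^n = \sum_{j \neq i} X_{ij}$ given $Z_i = q$ directly, by conditioning further on the full label vector $Z$ and then averaging. First I would fix $i$ and condition on $\{Z_i = q\}$ together with the labels $(Z_j)_{j \neq i}$ of all the other nodes. By the model assumption, conditionally on $Z$ the variables $\{X_{ij} : j \neq i\}$ are independent, and $X_{ij} \sim \mathcal{B}(\pi_{q Z_j})$ when $Z_i = q$. Hence $D_i^n$ is a sum of independent (but not identically distributed) Bernoulli variables given $Z$.

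The key observation is then that, since the $(Z_j)_{j \neq i}$ are i.i.d.\ $\mathcal{M}(1;\alpha)$ and independent of the conditional edge-generating mechanism, we can integrate out the neighbors' labels one edge at a time. For each $j \neq i$, marginalizing over $Z_j$ gives
\[
\Pr(X_{ij} = 1 \mid Z_i = q) = \sum_{r \in [Q]} \alpha_r \pi_{qr} = \overline{\pi}_q,
\]
and moreover the pairs $(Z_j, X_{ij})$ for distinct $j \neq i$ are mutually independent given $Z_i = q$, so the $X_{ij}$ themselves are i.i.d.\ $\mathcal{B}(\overline{\pi}_q)$ given $Z_i = q$. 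Summing $n-1$ such i.i.d.\ Bernoulli$(\overline{\pi}_q)$ variables yields that $D_i^n \mid Z_i = q \sim \mathrm{Bin}(n-1, \overline{\pi}_q)$, which is exactly the claim.

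The only genuinely delicate point is justifying the conditional independence of the $X_{ij}$'s, $j \neq i$, once the neighbor labels are integrated out: a priori, integrating out a common random quantity can create dependence, but here the $Z_j$ are independent \emph{across} $j$, and each $X_{ij}$ depends only on its own $Z_j$ (and on the fixed $Z_i = q$), so no shared randomness remains after marginalization. I would make this precise either by a short factorization argument on the joint law of $(X_{ij})_{j\neq i}$ conditional on $Z_i = q$, or equivalently by checking that the conditional probability generating function factorizes as $\bigl(1 - \overline{\pi}_q + \overline{\pi}_q s\bigr)^{n-1}$, which is the p.g.f.\ of $\mathrm{Bin}(n-1,\overline{\pi}_q)$. (There is also a harmless typo in the statement: $\overline{\pi}_q = \sum_{r\in[Q]} \alpha_r \pi_{qr}$, with the summation index $r$ matching $\alpha_r$.) Everything else is a routine consequence of the definition of the model.
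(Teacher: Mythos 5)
Your argument is correct: the paper states this proposition without any proof (it is treated as an immediate consequence of the model), and your marginalization of the neighbour labels $Z_j$, together with the factorization of the joint conditional law of $(X_{ij})_{j\neq i}$ given $Z_i=q$, is exactly the standard justification the authors implicitly rely on. You are also right that $\alpha_l$ in the statement is a typo for $\alpha_r$.
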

                                                                                                                                                                                                                                                                                                                               
$(D_i^n)_{i\in[n]}$ is therefore a sample of a mixture of binomial distributed random variables with parameters $(n-1,\overline{\pi}_q)_{q\in [Q]} $ and proportions $(\alpha_q)_{q\in[Q]}$.

These variables are correlated. Thus we are not in the validity range of the usual algorithms for mixtures like EM. But there is only one edge shared by any pair of nodes and the degrees are consequently not heavily correlated. Using the EM algorithm would make sense for practical purposes. Nevertheless we have chosen to use a faster one-step algorithm, unlike EM which is iterative.

\subsubsection*{A concentration inequality for binomial random variables}

The following inequality will be useful throughout the article. This will especially account for the fast concentration of the degree distribution. It is a straightforward consequence of Hoeffding's inequality for bounded variables.
 
\begin{theorem}(Hoeffding)
 Let $n\geq 1$, $p\in]0,1[$ and $(Y_i)_{i\in [n]}$ a sequence of independent identically distributed Bernoulli random variables with parameter $p$. Let $S_n=\sum_{i=1}^n Y_i$. Then for all $t>0$:
\begin{equation} \label{cctinq} \tag{CCT}
P\left ( \left | \frac{S_n}{n}-p\right| >t \right) \leq 2e^{-2nt^2} 
\end{equation}
\end{theorem}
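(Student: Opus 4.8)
The plan is to derive (CCT) directly from Hoeffding's inequality for sums of bounded independent random variables, since the $Y_i$ are i.i.d.\ and take values in the bounded interval $[0,1]$. First I would recall the general form of Hoeffding's inequality: if $W_1,\dots,W_n$ are independent with $W_i \in [a_i,b_i]$ almost surely, then for every $s>0$,
\[
P\!\left( \left| \sum_{i=1}^n W_i - \mathbb{E}\!\left(\sum_{i=1}^n W_i\right) \right| > s \right) \leq 2\exp\!\left( - \frac{2s^2}{\sum_{i=1}^n (b_i-a_i)^2} \right).
\]
This two-sided bound itself follows from the one-sided version applied to $(W_i)$ and to $(-W_i)$, combined with a union bound; the one-sided version in turn rests on a Chernoff argument together with Hoeffding's lemma, which states that a mean-zero random variable supported on $[a,b]$ has moment generating function bounded by $\exp(\lambda^2(b-a)^2/8)$. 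I would treat this as the known input, as the statement in the excerpt explicitly labels it "(Hoeffding)".

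Next I would specialize. Take $W_i = Y_i$, so that $a_i = 0$, $b_i = 1$, hence $b_i - a_i = 1$ and $\sum_{i=1}^n (b_i - a_i)^2 = n$. Since each $Y_i$ is Bernoulli with parameter $p$, we have $\mathbb{E}(Y_i) = p$, so $\mathbb{E}(S_n) = np$ where $S_n = \sum_{i=1}^n Y_i$. Substituting into the general inequality gives, for every $s>0$,
\[
P\!\left( \left| S_n - np \right| > s \right) \leq 2\exp\!\left( - \frac{2s^2}{n} \right).
\]
Finally, setting $s = nt$ for $t > 0$ and dividing the event $\{|S_n - np| > nt\}$ through by $n$, we obtain $\{\,|S_n/n - p| > t\,\}$, and the right-hand side becomes $2\exp(-2n t^2)$, which is exactly (CCT).

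The argument is essentially a change of variables once Hoeffding's inequality is in hand, so there is no genuine obstacle; the only point requiring a little care is making sure the scaling is done consistently — that the threshold $s = nt$ is matched correctly in both the probability event (dividing by $n$ to pass from $|S_n - np|$ to $|S_n/n - p|$) and in the exponent (where $2s^2/n = 2(nt)^2/n = 2nt^2$). If one instead wanted a fully self-contained proof, the work would shift to establishing Hoeffding's lemma and running the Chernoff bound, but given that the theorem is quoted as known, that is unnecessary here.
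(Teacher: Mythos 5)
Your proposal is correct and matches the paper's treatment: the paper gives no proof at all, merely noting that the inequality ``is a straightforward consequence of Hoeffding's inequality for bounded variables,'' which is precisely the specialization (to $[0,1]$-valued variables, with the substitution $s=nt$) that you carry out explicitly. The scaling check $2s^2/n = 2(nt)^2/n = 2nt^2$ is right, so nothing is missing.
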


\subsubsection*{Concentration property of the normalized degrees}

Define the normalized degree of node $i\in[n]$: \[T_i^n=\frac{D_i^n}{n-1}\]
$(T_i^n)_{i\in[n]}$ cluster around their average conditionally on the node class when $n$ is increasing, according to \eqref{cctinq}:

\begin{equation} \label{cctcond}
 P(|T_i^n-\overline{\pi}_q|>t | Z_i=q) \leq 2e^{-2nt^2}
\end{equation}

Hence normalized degrees corresponding to $q$-labeled nodes gather around $\overline{\pi}_q$. Consequently, in the degree distribution, nodes from different classes split up into groups centered around $\overline{\pi}_q$, provided that all conditional averages $(\overline{\pi}_q)_{q\in [Q]}$ are different. From now on, we will assume that they are:

\paragraph{Assumption}
\begin{equation} \label{hypo} \tag{A}
 \forall q,r\in [Q] \quad q\neq r \Rightarrow \overline{\pi}_q\neq \overline{\pi}_r
\end{equation}

Note that, if it is known that two classes have the same conditional average, it is possible to resort to the concentration of another marginal distribution: the distribution of the number of common neighbors for each pair or nodes. Refer to Appendix \ref{mixed}.


\subsection{Largest Gaps Algorithm}
 Because of the concentration, a larger gap is expected between normalized degrees of nodes from different classes than nodes from the same class. The following algorithm relies on this remark. It consists in building $Q$ blocks by finding the $Q-1$ largest intervals formed by two consecutive normalized degrees.

If $(u_i)_{i\in [n]}$ is a sequence of real numbers, $(u_{(i)})_{i\in [n]}$ denotes the same sequence but sorted in increasing order.

\subsubsection*{Algorithm}
\begin{itemize}
 \item Sort the sequence of the normalized degrees in increasing order: \[ T_{(1)} \leq \dots \leq T_{(n)} \]
  \item Calculate every gap between consecutive normalized degrees: \[ T_{(i+1)} - T_{(i)} \text{ for all } i\in [n-1]\]
  \item Find the indexes of the $Q-1$ largest gaps: $i_1<\dots<i_{Q-1}$, such that for all $k\in [Q-1]$ and for all $i\in[n]\setminus \{i_1,\dots,i_{Q-1}\}$: \[ T_{(i_{k+1})} - T_{(i_{k})} \geq T_{(i+1)} - T_{(i)} \]
  \item Noting $(i_0)=0$ and $(i_Q)=n$, associate with each index $(i)$ a class number: $i\mapsto k$ such that $(i_{k-1})< (i) \leq (i_k)$.
\end{itemize}

\subsubsection*{Example} On the figure below, the largest gaps correspond to the intervals $[T_{(2)},T_{(3)}[$, denoted by \ding{172}, and $[T_{(9)},T_{(10)}[$, denoted by \ding{173}. Nodes $(1)$ and $(2)$ are therefore classified in class 1, nodes from $(3)$ to $(9)$ in 2, nodes $(10)$ and $(11)$ in 3.

\begin{figure}[h]
\begin{center}
\caption{Repartition of the normalized degrees \\ $\Box$: Class 1, $\Diamond$: Class 2, $\bigcirc$: Class 3}
\begin{tikzpicture}
 \draw (0,0) -- (12,0) ;
 \draw (1,0) node {$|$} ;
 \draw (1,-0.5) node {$0$} ;
 \draw (11,0) node {$|$} ;
 \draw (11,-0.5) node {$1$} ;
 \draw (1.86,0) node {$\Box$} ;
 \draw (1.86,0) node {$.$} ;
 \draw (1.86,-0.5) node {$T_{(1)}$} ;
 \draw (2.5,0) node {$\Box$} ;
 \draw (2.5,0) node {$.$} ;
 \draw (2.5,-0.5) node {$T_{(2)}$} ;
 \draw (7,0) node {$\Diamond$} ;
 \draw (7,0) node {$.$} ;
 \draw (6.96,-0.5) node {$T_{(3)}$} ;
 \draw (7.2,0) node {$\Diamond$} ;
 \draw (7.2,0) node {$.$} ;
 \draw (7.3,0) node {$\Diamond$} ;
 \draw (7.3,0) node {$.$} ;
 \draw (7.45,0) node {$\Diamond$} ;
 \draw (7.45,0) node {$.$} ;
 \draw (7.55,-0.5) node {$\dots$} ;
 \draw (7.55,0) node {$\Diamond$} ;
 \draw (7.55,0) node {$.$} ;
 \draw (7.75,0) node {$\Diamond$} ;
 \draw (7.75,0) node {$.$} ;
 \draw (7.97,0) node {$\Diamond$} ;
 \draw (7.97,0) node {$.$} ;
 \draw (8.2,-0.5) node {$T_{(9)}$} ;
 \draw (9.7,-0.01) node {$\bigcirc$} ;
 \draw (9.7,-0.01) node {$.$} ;
 \draw (9.5,-0.5) node {$T_{(10)}$} ;
 \draw (9.9,-0.01) node {$\bigcirc$} ;
 \draw (9.9,-0.01) node {$.$} ;
 \draw (10.3,-0.5) node {$T_{(11)}$} ;
 \draw [<->] (2.5,0.4) -- (7,0.4) ; 
 \draw [<->] (7.97,0.4) -- (9.7,0.4) ;
 \draw (4.75,0.4) node[above] {\ding{172}} ;
 \draw (8.86,0.4) node[above] {\ding{173}} ;
\end{tikzpicture}

\end{center}
\end{figure}

This algorithm has all the qualities mentioned in Introduction and makes good use of the concentration, which makes the consistency easy to prove. Whereas variational EM algorithms runs as many quadratic steps as needed to reach convergence and classical spectral clustering runs in cubic time, this algorithm is especially fast. Indeed the sorting runs in quasilinear time and although the computation of the degrees is quadratic, this is a very basic operation which is very quickly performed. Note that \citet{condon2001algorithms} gave an algorithm running in linear time and consistent under SBM ---~called planted $\ell$-partition model in this paper~---, but provided that the weights of the blocks are equal.

Nevertheless this algorithm seems to be relatively naive because it takes every normalized degree into account and each one carries the same weight, even if it is isolated and not statistically representative. In the worst case, one point is sufficient to trick the algorithm yet makes the classification wrong by a majority, especially at low graph sizes.

\subsection{Main results} \label{mainresult}

The true (respectively estimated) partition of $[n]$ in classes is denoted by the set $\{\mathcal{C}^n_q\}_{q\in[Q]}$, (resp. by $\{\widehat{\mathcal{C}}^n_q\}_{q\in[Q]}$) and the cardinality of the true $q$-labeled class by $N^n_q$ (resp. by $\widehat{N}^n_q$). We expect the estimated partition to be almost surely the true partition when $n$ is large enough. Define $E_n$ as the event ``The LG algorithm makes at least one mistake'', that is:
\[ E_n=\left\{ \{\widehat{\mathcal{C}}_q^n\}_q \neq \{\mathcal{C}_q^n\}_q \right \} \]


\begin{dfn}
 $\{\widehat{\mathcal{C}}^n_q\}_{q\in[Q]}$ is said to be consistent if
 $$P_{\alpha,\pi}^n( E_n ) \xrightarrow[n\to\infty]{} 1$$
\end{dfn}



\begin{dfn}
 Define $\delta$ the characteristic minimal gap (or separability) of the model in the following way: \[\delta=\min\limits_{q\neq r}|\overline{\pi}_q-\overline{\pi}_r|\]
\end{dfn}

Finally, let us define $\alpha_0$ the smallest proportion of the model. The classification is harder for small values of $\alpha_0$: 
\[\alpha_0=\min\limits_{q\in [Q]} \alpha_q \]

\begin{theorem} \label{algoconsistency} Under Assumption \eqref{hypo},
$$P_{\alpha,\pi}^n(E_n) \leq 2ne^{-\frac{1}{8}n\delta^2} + Q (1-\alpha_0)^{n+1}$$
\end{theorem}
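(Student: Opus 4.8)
The plan is to control $E_n$ by decomposing it into two bad events: either some class is empty (then the LG algorithm, which always produces exactly $Q$ nonempty groups, cannot match the true partition because it splits an existing cluster), or all classes are nonempty but some normalized degree strays too far from its class centre $\overline{\pi}_q$. Concretely, fix $\eta=\delta/2$ and let $B_n$ be the event that there exists $i\in[n]$ with $|T_i^n-\overline{\pi}_{Z_i}|\geq \eta$, and let $A_n$ be the event that some class $q$ has $N_q^n=0$. I claim $E_n\subseteq A_n\cup B_n$. On the complement $A_n^c\cap B_n^c$ every node sits within $\eta=\delta/2$ of its own centre, and distinct centres are at distance at least $\delta$ apart; hence the normalized degrees of any two nodes in the same class differ by less than $\delta$, while between the rightmost point of one occupied class-interval and the leftmost point of the next occupied one there is a gap of at least $\delta-2\eta=0$... so I actually need $\eta<\delta/2$ strictly, or rather I take $\eta=\delta/2$ and argue with the strict triangle inequality, or more cleanly take the threshold to be anything in $(0,\delta/2)$; the within-class gaps are $<2\eta\le\delta$ wait — let me instead just say: within-class gaps are $\le 2\eta$ and between-class gaps are $\ge \delta-2\eta$, and as long as $2\eta<\delta-2\eta$, i.e. $\eta<\delta/4$, the $Q-1$ largest gaps are exactly the $Q-1$ between-class gaps and LG recovers the truth. (This is why the final bound carries the constant $1/8$ in the exponent: taking $\eta=\delta/4$ and applying \eqref{cctcond} with $t=\delta/4$ gives $2e^{-2n(\delta/4)^2}=2e^{-n\delta^2/8}$ per node.) So $E_n\subseteq A_n\cup B_n$ with $B_n$ now the event ``some node is more than $\delta/4$ from its centre''.

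Next I bound the two pieces. For $B_n$, condition on $Z$ and use \eqref{cctcond} with $t=\delta/4$: for each $i$, $P(|T_i^n-\overline{\pi}_{Z_i}|>\delta/4)\le 2e^{-2n(\delta/4)^2}=2e^{-n\delta^2/8}$, and a union bound over the $n$ nodes gives $P(B_n)\le 2ne^{-n\delta^2/8}$; since this bound does not depend on $Z$, it survives taking expectation over $Z$. For $A_n$, the class sizes $(N_1^n,\dots,N_Q^n)$ are multinomial $\mathcal{M}(n;\alpha)$, so $P(N_q^n=0)=(1-\alpha_q)^n\le(1-\alpha_0)^n$, and a union bound over $q$ gives $P(A_n)\le Q(1-\alpha_0)^n$. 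Combining, $P_{\alpha,\pi}^n(E_n)\le 2ne^{-n\delta^2/8}+Q(1-\alpha_0)^n$, which is slightly stronger than the stated bound (the paper writes $(1-\alpha_0)^{n+1}$, a harmless weakening — presumably their argument loses one power somewhere, e.g. through a minor bookkeeping detail in how an empty class is detected, and one can always write $(1-\alpha_0)^n\le(1-\alpha_0)^{n+1}/(1-\alpha_0)$... no, that's the wrong direction; I'll just prove the $(1-\alpha_0)^n$ version and note it implies a fortiori nothing — actually I should match their statement, so I will instead bound $P(A_n)$ a touch more loosely to land on $Q(1-\alpha_0)^{n+1}$, or simply remark that the displayed inequality is implied by the cleaner one since $(1-\alpha_0)^{n+1}\ge$ is false; cleanest is to just prove $\le 2ne^{-n\delta^2/8}+Q(1-\alpha_0)^n$ and observe it is at least as strong).

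The only genuinely delicate point is the combinatorial inclusion $E_n\subseteq A_n\cup B_n$ — specifically verifying that on $A_n^c\cap B_n^c$ the LG tie-breaking and the ``$Q-1$ largest gaps'' selection land exactly on the between-class boundaries. One must check that the between-class gaps (there are $Q-1$ of them since all $Q$ classes are occupied) each strictly exceed every within-class gap, so the selection is unambiguous and correct; this follows from $2\eta<\delta-2\eta$ as above. Everything else is a routine union bound plus the already-available inequalities \eqref{cctcond} and the multinomial marginal from the Proposition on degree distributions. I would present the containment as a short lemma-style paragraph, then dispatch $P(A_n)$ and $P(B_n)$ in two lines each.
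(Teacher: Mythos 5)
Your argument is the paper's own proof in all essentials: the inclusion $E_n\subseteq A_n\cup B_n$ is Proposition \ref{propevent} (ideal event: no empty class and dispersion $d_n$ below $\delta/(4+\varepsilon)$), the bound on $B_n$ is Proposition \ref{propdispersion} (union bound over nodes plus \eqref{cctcond}), and the bound on $A_n$ is the same binomial computation, so the proposal is correct and essentially identical in route. Two minor remarks: the tie at threshold exactly $\delta/4$ that you wrestle with is resolved in the paper by working with $\delta/(4+\varepsilon)$ and letting $\varepsilon\to 0$ only in the final probability bound (the exponent is continuous in $\varepsilon$), and your final term $Q(1-\alpha_0)^n$ is exactly what the paper's own proof delivers as well --- the exponent $n+1$ in the theorem's display is not obtained by their argument either and appears to be a typo.
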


Section \ref{proofs} contains the proof of this theorem. The most important parameter is $\delta$: the smaller it is, the harder the separation between the classes is, and so the larger $n$ must be to retrieve the true partition.

\subsubsection*{Convergence rates}

In order to derive orders of magnitude of $n$ to achieve convergence in Theorem \ref{algoconsistency}, we choose another asymptotic framework only in this paragraph, where the parameters are functions of $n$. Consistency does not mean convergence under the distribution of $\mathcal{G}(n,\alpha,\pi)$ anymore, but under $\mathcal{G}(n,\alpha^n,\pi^n)$, with $\alpha^n=(\alpha_1^n,\dots,\alpha^n_{Q_n})$ and $\pi^n=(\pi_{qr}^n)_{1\leq q,r\leq Q_n}$. We assume that:
\[\delta_n\xrightarrow[n\to\infty]{} 0,\: \alpha_0^n \xrightarrow[n\to\infty]{} 0 \text{ and } Q_n\xrightarrow[n\to\infty]{} +\infty \]

\begin{prop} \label{classifrates}
The inference method with LG algorithm is still consistent under the following assumptions:
\begin{enumerate}[(a)]
 \item \label{A1} $\displaystyle \varliminf_{n\to+\infty} \delta_n\sqrt{\frac{n}{\ln n}} > 2\sqrt{2} $
 \item \label{A2} $\displaystyle Q_n=O\left(\sqrt{\frac{n}{\ln n}}\right) $
 \item \label{A3} $\displaystyle \varliminf_{n\to+\infty} -\frac{n\ln(1-\alpha_0^n)}{\ln Q_n} > 1$
\end{enumerate}
For example, if $\displaystyle Q_n=1+\left\lfloor\sqrt{\frac{n}{\ln n}}\right\rfloor$, it is sufficient that: $\displaystyle \alpha_0^n \geq  \frac{\ln n}{2n}$.
\end{prop}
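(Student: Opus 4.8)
\emph{Proof idea.} The plan is to use Theorem~\ref{algoconsistency} as a black box. That inequality is non-asymptotic and uniform in the parameters, so it applies verbatim to the $n$-dependent model $\mathcal{G}(n,\alpha^n,\pi^n)$:
\[ P^n_{\alpha^n,\pi^n}(E_n)\;\le\; B_n,\qquad B_n:=2n\,e^{-\frac18 n\delta_n^2}+Q_n(1-\alpha_0^n)^{n+1}. \]
Hence it suffices to prove $B_n\to 0$, which I would do by showing that the logarithm of each of the two summands tends to $-\infty$.

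For the first summand, hypothesis (a) provides an $\varepsilon>0$ with $n\delta_n^2\ge(2\sqrt2+\varepsilon)^2\ln n=(8+c)\ln n$, $c>0$, for all large $n$, so $\ln\!\big(2n\,e^{-\frac18 n\delta_n^2}\big)=\ln 2+\ln n-\tfrac18 n\delta_n^2\le\ln 2-\tfrac{c}{8}\ln n\to-\infty$: the exponential decay $e^{-\frac18 n\delta_n^2}$ beats the polynomial prefactor $2n$. For the second summand, hypothesis (c) provides an $\varepsilon'>0$ with $-n\ln(1-\alpha_0^n)\ge(1+\varepsilon')\ln Q_n$ for all large $n$, hence (using $\ln(1-\alpha_0^n)\le0$, so that $(n+1)\ln(1-\alpha_0^n)\le n\ln(1-\alpha_0^n)$) one gets $\ln\!\big(Q_n(1-\alpha_0^n)^{n+1}\big)\le\ln Q_n-(1+\varepsilon')\ln Q_n=-\varepsilon'\ln Q_n\to-\infty$, since $Q_n\to\infty$. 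Thus $B_n\to0$. Note that hypothesis (b) is not actually used here, and is in fact implied by (a): the $Q_n$ distinct reals $\overline{\pi}^n_q\in[0,1]$ have minimal gap $\delta_n\le(Q_n-1)^{-1}$, which together with (a) forces $Q_n=O(\sqrt{n/\ln n})$; it is kept only for transparency.

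For the numerical example I would just substitute orders of magnitude: $Q_n=1+\lfloor\sqrt{n/\ln n}\rfloor$ gives $\ln Q_n=\tfrac12\ln n-\tfrac12\ln\ln n+o(1)$, and $-\ln(1-x)\ge x$ gives $-n\ln(1-\alpha_0^n)\ge\tfrac12\ln n$ whenever $\alpha_0^n\ge\ln n/(2n)$; plugging these into the estimate of the second summand (and keeping the negative correction $-\tfrac12\ln\ln n$) shows $B_n\to0$ for this choice, while (a)--(b) are assumed as before. The argument has no genuine obstacle once Theorem~\ref{algoconsistency} is in hand --- it reduces to the two one-line logarithmic estimates above; the only points requiring care are converting the $\liminf$ hypotheses into the ``eventually bounded below by a constant $>1$'' statements used here, and, for the example, retaining the logarithmic lower-order corrections so that the borderline constants still close.
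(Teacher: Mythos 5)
Your proposal is correct and follows essentially the same route as the paper's own proof: apply the non-asymptotic bound of Theorem~\ref{algoconsistency} to the $n$-dependent model and show each summand tends to zero via the same logarithmic estimates, noting (as the paper also does) that hypothesis (b) is forced by (a) since $\delta_n\le(Q_n-1)^{-1}$. If anything you are slightly more careful than the paper on the numerical example, where the ratio in (c) only tends to $1$ and one must indeed retain the $-\tfrac12\ln\ln n$ correction to conclude.
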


\begin{proof}
Assumption \eqref{A1} implies that there exists $C>2\sqrt{2}$ such that for $n$ large enough:
\[ \delta_n \sqrt{\frac{n}{\ln n}} \geq C \text{ and then } \frac{n\delta_n^2}{\ln n}-8 \geq C^2 -8 >0 \]
Therefore \begin{align*} n\exp\left( -\frac{1}{8}n\delta_n^2 \right) & = \exp \left[ -\frac{1}{8}\ln n\left(\frac{n\delta_n^2}{\ln n}-1 \right) \right] \\
           & \leq \exp\left[ -\frac{1}{8}\ln n\left(C^2 - 8 \right) \right] \xrightarrow[n\to+\infty]{}0
          \end{align*}
Secondly \eqref{hypo} requires $(Q_n-1)\delta_n\leq 1$ as a necessary condition. Hence, applying the first inequality: 
\[ Q_n\leq 1+\frac{1}{\delta_n} =O\left(\sqrt{\frac{n}{\ln n}}\right) \] 

According to Assumption \eqref{A3}, there exists $C'>1$ such that for $n$ large enough:
\[ -\frac{n\ln(1-\alpha_0^n)}{\ln Q_n} > C',\: \text{ so that:} \]

\begin{align*} Q_n(1-\alpha_0^n)^n  & =  \exp\left[ \ln Q_n + n\ln (1-\alpha_0^n) \right ] \\
& = \exp\left[-\ln Q_n \left( \frac{-n\ln(1-\alpha_0^n)}{\ln Q_n} -1\right)\right] \\
& \leq \exp\left(-\ln Q_n \left( C' -1\right)\right) \xrightarrow[n\to+\infty]{} 0
\end{align*}


\end{proof}

Large graphs are more and more sparse as $n$ increases, which results in the decrease in the connectivity defined by $\overline{\overline{\pi}}_n= \esp{\alpha^n,\pi^n}{T_1^n}$.

\begin{prop} The LG algorithm is still consistent in the following cases:
 \begin{itemize}
  \item while $\displaystyle\left(\frac{\ln n}{n}\right)^{3/2}=O(\overline{\overline{\pi}}_n)$, if $Q_n$ is bounded.
  \item while $\displaystyle\sqrt{\frac{\ln n}{n}} = O(\overline{\overline{\pi}}_n)$, if $Q_n\sim\sqrt{\frac{n}{\ln n}}$.
 \end{itemize}

\end{prop}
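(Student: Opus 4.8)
The plan is to read this statement off Proposition \ref{classifrates} after translating its hypotheses (a)--(c) into a statement about the connectivity $\overline{\overline{\pi}}_n$. The bridge is an elementary lower bound: since $\mathbb{E}(T_1^n\mid Z_1=q)=\overline{\pi}_q^n$, one has $\overline{\overline{\pi}}_n=\sum_{q\in[Q_n]}\alpha_q^n\overline{\pi}_q^n$, and since the $Q_n$ conditional means are pairwise $\delta_n$-separated, sorting them shows the $k$-th smallest is at least $(k-1)\delta_n$; together with $\alpha_q^n\ge\alpha_0^n$ this yields
\[ \overline{\overline{\pi}}_n \;\ge\; \alpha_0^n\sum_{k=1}^{Q_n}(k-1)\delta_n \;=\; \frac{Q_n(Q_n-1)}{2}\,\alpha_0^n\,\delta_n, \]
so the connectivity is controlled from below by the three quantities that govern consistency in Proposition \ref{classifrates}.

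First I would record the ``critical'' orders of magnitude at which Proposition \ref{classifrates} and Theorem \ref{algoconsistency} still apply. Assumption (a) permits $\delta_n$ of order $\sqrt{(\ln n)/n}$ (precisely, $\delta_n>2\sqrt2\,\sqrt{(\ln n)/n}$ eventually). For $\alpha_0^n$: when $Q_n\to\infty$ polynomially in $n$, assumption (c) permits $\alpha_0^n$ as small as $(\ln Q_n)/n\asymp(\ln n)/n$ (the example already worked out after Proposition \ref{classifrates}); when $Q_n$ is bounded, assumption (c) becomes vacuous but the second term $Q_n(1-\alpha_0^n)^{n+1}$ of Theorem \ref{algoconsistency} still forces $n\alpha_0^n\to\infty$, and the choice $\alpha_0^n\asymp(\ln n)/n$ works uniformly. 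Assumption (b), $Q_n=O(\sqrt{n/\ln n})$, then comes for free: as noted in the proof of Proposition \ref{classifrates}, Assumption \eqref{hypo} forces $(Q_n-1)\delta_n\le1$, hence $Q_n\le1+1/\delta_n=O(\sqrt{n/\ln n})$, the case $Q_n\sim\sqrt{n/\ln n}$ being extremal.

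Plugging $\delta_n\asymp\sqrt{(\ln n)/n}$ and $\alpha_0^n\asymp(\ln n)/n$ into the displayed bound gives $\overline{\overline{\pi}}_n\gtrsim Q_n^2\cdot\frac{\ln n}{n}\cdot\sqrt{\frac{\ln n}{n}}=Q_n^2\big((\ln n)/n\big)^{3/2}$, and it remains only to specialise $Q_n$. If $Q_n$ is bounded, $Q_n^2=\Theta(1)$ and the threshold is $\big((\ln n)/n\big)^{3/2}$; if $Q_n\sim\sqrt{n/\ln n}$, then $Q_n^2\sim n/\ln n$ and the threshold is $\big((\ln n)/n\big)^{3/2}\cdot(n/\ln n)=\sqrt{(\ln n)/n}$. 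Conversely, as long as the connectivity stays of at least this order in the corresponding parametrisation, the above choices of $\delta_n,\alpha_0^n$ meet (a)--(c), so Proposition \ref{classifrates} delivers consistency, which is exactly the claim.

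I expect the only real difficulty to be conceptual rather than computational: $\overline{\overline{\pi}}_n$ on its own does not control the separability $\delta_n$ (a model can be dense yet have arbitrarily close conditional means), so the statement must be understood in the regime where the $\overline{\pi}_q^n$ are spread out commensurately with their number, i.e.\ where $\overline{\overline{\pi}}_n$ has the order $Q_n^2\alpha_0^n\delta_n$ of the lower bound above; outside that regime one cannot conclude from connectivity alone. Keeping track of the exact powers of $\ln n$ coming from (a) and (c), and disposing of the second term of Theorem \ref{algoconsistency} by hand in the bounded-$Q_n$ case, are the only points needing care.
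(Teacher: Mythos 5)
Your argument is correct and follows essentially the same route as the paper, whose proof is exactly the one-line lower bound $\overline{\overline{\pi}}_n\geq \alpha_0^n\,\frac{Q_n(Q_n-1)}{2}\,\delta_n$ obtained by sorting the $\delta_n$-separated conditional means; you have simply filled in the sketched step of substituting the critical rates $\delta_n\asymp\sqrt{(\ln n)/n}$ and $\alpha_0^n\asymp(\ln n)/n$ from Proposition \ref{classifrates} and specialising $Q_n$ in the two regimes. Your closing caveat that connectivity alone does not control $\delta_n$ is a fair reading of how the (sketch-level) statement must be interpreted, and does not change the substance of the argument.
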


\begin{proof}
We sketch the proof with the following inequality, which estimates the connectivity of the sparsest model:
\[\displaystyle \overline{\overline{\pi}}_n=\sum_{q=1}^{Q_n} \alpha_q \overline{\pi}_q\geq \sum_{q=1}^{Q_n}
 \alpha_q^n(q-1)\delta_n\geq \alpha_0^n \frac{Q_n(Q_n-1)}{2} \delta_n\]
\end{proof}

\section{Consistency proof of the LG algorithm} \label{proofs}

\subsection{An ideal event for the algorithm}

The LG algorithm delivers the true partition especially when none of the classes is empty, and the spreading of the normalized degrees is small compared with the minimal gap $\delta$. $A_n$ denotes the event ``No true class is empty'', that is \[A_n=\bigcap_{q\in[Q]}\{\mathcal{C}^n_q\neq\varnothing\}= \bigcap_{q\in[Q]}\{N_q^n=0\}\] 

\begin{dfn}
 We call maximal intraclass distance (or spreading) the random variable $d_n$ defined by: \[d_n=\max\limits_{q\in [Q]}\sup\limits_{i\in \mathcal{C}^n_q} |T_i^n-\overline{\pi}_q|\]
\end{dfn}
 This is the maximal distance between the normalized degree of a node and its own conditional mean, over all nodes and all classes. This is basically a measurement of the within-class spreading of the normalized degrees.


\begin{prop} \label{propevent}
Under Assumption \eqref{hypo}, the following inclusion holds for all $\varepsilon>0$: \[ A_n\cap\left\{d_n\leq \frac{\delta}{4+\varepsilon}\right\} \subset \overline{E}_n \]
\end{prop}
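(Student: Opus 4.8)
The plan is to prove the stronger deterministic fact that \emph{on every outcome} in $A_n\cap\{d_n\le \delta/(4+\varepsilon)\}$ the LG algorithm returns exactly the true partition, which immediately gives the inclusion into $\overline{E}_n$. Fix such an outcome. First I would relabel the classes by a permutation $\sigma$ of $[Q]$ so that $\overline{\pi}_{\sigma(1)}<\dots<\overline{\pi}_{\sigma(Q)}$, which is legitimate by Assumption \eqref{hypo} and harmless since $E_n$ is phrased in terms of unordered partitions. The core observation is that every normalized degree of a $q$-labelled node lies in the interval $I_q=[\overline{\pi}_q-d_n,\,\overline{\pi}_q+d_n]$, of length $2d_n\le 2\delta/(4+\varepsilon)<\delta$; since consecutive sorted means differ by at least $\delta$, the intervals $I_{\sigma(1)},\dots,I_{\sigma(Q)}$ are pairwise disjoint and occur in this left-to-right order. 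From this I would deduce that the sorted sample $T_{(1)}\le\dots\le T_{(n)}$ is block-structured: its first $N^n_{\sigma(1)}$ entries are exactly the class-$\sigma(1)$ degrees, the next $N^n_{\sigma(2)}$ the class-$\sigma(2)$ degrees, and so on. Here the event $A_n$ is used to guarantee each block is nonempty, hence $n\ge Q$ and the $Q-1$ block boundaries $j_k=N^n_{\sigma(1)}+\dots+N^n_{\sigma(k)}$, $1\le k\le Q-1$, are well defined and distinct.

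The next step is the gap comparison. A consecutive gap $T_{(i+1)}-T_{(i)}$ that stays inside a single block is at most $2d_n$ (both endpoints lie in the same $I_q$), whereas a gap straddling a boundary $i=j_k$ is at least $(\overline{\pi}_{\sigma(k+1)}-d_n)-(\overline{\pi}_{\sigma(k)}+d_n)\ge \delta-2d_n$. The hypothesis $d_n\le\delta/(4+\varepsilon)$ together with $\varepsilon>0$ gives $2d_n<\delta-2d_n$ (equivalently $d_n<\delta/4$), so each of the $Q-1$ inter-block gaps strictly exceeds every one of the remaining $n-Q$ intra-block gaps. Consequently the family $i_1<\dots<i_{Q-1}$ of ``$Q-1$ largest gaps'' chosen by the algorithm is forced to be $(j_1,\dots,j_{Q-1})$, and the induced label map assigns to the $k$-th estimated block exactly the sorted positions $j_{k-1}<i\le j_k$, i.e.\ the set $\mathcal{C}^n_{\sigma(k)}$. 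Hence $\{\widehat{\mathcal{C}}^n_q\}_q=\{\mathcal{C}^n_{\sigma(q)}\}_q=\{\mathcal{C}^n_q\}_q$, so $E_n$ does not occur.

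I expect the only delicate point to be the uniqueness of the selected gaps: the algorithm merely requires $Q-1$ indices each of whose gap dominates every unselected gap, so one must rule out that a within-block gap ties with or outranks a between-block gap. This is precisely what the strict inequality from $\varepsilon>0$ provides, combined with the fact that there are exactly $Q-1$ between-block gaps (which in turn relies on $A_n$). Everything else — disjointness of the $I_q$, the block structure of the sorted sample, and the two gap bounds — is a routine unwinding of the definitions of $d_n$ and $\delta$, requiring no real computation.
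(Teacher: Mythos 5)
Your proof is correct and follows essentially the same route as the paper's: bound within-class distances of normalized degrees by $2d_n\le 2\delta/(4+\varepsilon)$, bound between-class distances below by $\delta-2d_n>2\delta/(4+\varepsilon)$, and use $A_n$ to ensure there are exactly $Q-1$ between-class consecutive gaps so that the algorithm's $Q-1$ largest gaps coincide with the class boundaries. Your write-up is somewhat more explicit about the block structure of the sorted sample and the tie-breaking issue, but the underlying argument is the one in the paper.
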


\begin{proof}
Suppose that $A_n\cap\{d_n\leq\frac{\delta}{4+\varepsilon}\}$ is true. For all $i,j\in [n]$ and $q,r \in [Q]$:
 \begin{itemize}
\item If nodes $i$ and $j$ have label $q$, then: \[|T_i-T_j|\leq |T_i-\overline{\pi}_q|+|T_j-\overline{\pi}_q|\leq \frac{2\delta}{4+\varepsilon}\]
\item Inversely, if they have different labels, respectively $q$ and $r$, then:
 \begin{align*}
    |T_i-T_j| & \geq |T_j-\overline{\pi}_q|-|T_i-\overline{\pi}_q| \\
	      & \geq |T_j-\overline{\pi}_q| - \frac{\delta}{4+\varepsilon} \\
	      & \geq |       \overline{\pi}_l-       \overline{\pi}_q| -|T_j-       \overline{\pi}_l|-\frac{\delta}{4+\varepsilon} \\
	      & \geq \delta - \frac{\delta}{4+\varepsilon} - \frac{\delta}{4+\varepsilon} = \frac{2+\varepsilon}{4+\varepsilon}\delta > \frac{2\delta}{4+\varepsilon}
   \end{align*}
\end{itemize}
As a conclusion of this alternative, $i$ and $j$ are in the same class if and only if $|T_i-T_j|\leq \frac{2\delta}{4+\varepsilon}$. Notice moreover that there exists exactly $Q-1$ intervals among the set $([T_i,T_j[)_{i,j}$ strictly greater than $\frac{2\delta}{4+\varepsilon}$ on this event. Hence the $Q-1$ largest intervals lie between groups of normalized degrees from different classes; whereas all others lie between degrees of the same class. In this case the algorithm returns the true partition.


\end{proof}

\subsection{Bound of the probability of large spreading}

In this paragraph we shall show that the dispersion $d_n$ converges to 0 thanks to the subgaussian tail of the binomial distributions. This is a basic result of this article, because all others require controlling the dispersion.

\begin{prop} \label{propdispersion}
For all $t>0$:
$$P(d_n>t)\leq 2ne^{-2nt^2}$$
\end{prop}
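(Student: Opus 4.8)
The plan is to reduce the statement to the conditional concentration inequality \eqref{cctcond} by a union bound over the $n$ nodes. First I would observe that, since the classes $\{\mathcal{C}^n_q\}_{q\in[Q]}$ form a partition of $[n]$, the maximal intraclass distance can be rewritten node by node as
\[ d_n = \max_{i\in[n]} \left| T_i^n - \overline{\pi}_{Z_i} \right|, \]
so that the event $\{d_n>t\}$ is exactly the union $\bigcup_{i\in[n]}\{\,|T_i^n-\overline{\pi}_{Z_i}|>t\,\}$ of $n$ events. The union bound then gives
\[ P(d_n>t) \le \sum_{i\in[n]} P\!\left(|T_i^n-\overline{\pi}_{Z_i}|>t\right). \]

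The second step is to bound each summand, taking care of the random centering $\overline{\pi}_{Z_i}$. Conditioning on $\{Z_i=q\}$ turns $\overline{\pi}_{Z_i}$ into the deterministic constant $\overline{\pi}_q$, and the conditional concentration inequality \eqref{cctcond} yields $P(|T_i^n-\overline{\pi}_q|>t\mid Z_i=q)\le 2e^{-2nt^2}$ for every $q\in[Q]$. Averaging over the law of $Z_i$, which puts mass $\alpha_q$ on $\{Z_i=q\}$ and satisfies $\sum_{q}\alpha_q=1$, gives $P(|T_i^n-\overline{\pi}_{Z_i}|>t)\le 2e^{-2nt^2}$, uniformly in $i$. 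Plugging this into the union bound produces $P(d_n>t)\le 2ne^{-2nt^2}$, which is the claim.

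I do not expect a serious obstacle here: the argument is essentially the union bound combined with \eqref{cctcond}. The only point requiring a little care is the bookkeeping around the random center $\overline{\pi}_{Z_i}$ — one must condition on $Z_i$ \emph{before} invoking the Hoeffding-type bound, since \eqref{cctcond} is a conditional statement resting on the fact that, given $Z_i=q$, the edge indicators $\{X_{ij}\}_{j\ne i}$ are independent and marginally Bernoulli$(\overline{\pi}_q)$. Note also that this proposition uses neither Assumption \eqref{hypo} nor any separability hypothesis: it is a pure within-class concentration statement, which is precisely why it serves as the basic building block for the subsequent results.
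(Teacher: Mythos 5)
Your proof is correct and follows essentially the same route as the paper: a union bound over the $n$ nodes combined with the conditional Hoeffding bound \eqref{cctcond}. The only (immaterial) difference is the order of operations --- you union-bound first and then condition on $Z_i$ alone, whereas the paper conditions on the whole vector $Z$ before union-bounding --- and your version is if anything slightly cleaner, since conditioning only on $Z_i=q$ is exactly the setting in which $D_i^n$ is Binomial$(n-1,\overline{\pi}_q)$.
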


\begin{proof}
 It consists in conditioning by the class of each node, in order to apply the concentration inequality \eqref{cctinq}, and of a union bound. Since $D_i^n\sim \mathcal{B}(n,\overline{\pi} _q)$, \eqref{cctinq} gave the inequality \eqref{cctcond}:

\[ P(|T_i-\overline{\pi}_q|>t | Z_i=q) \leq 2e^{-2nt^2} \]
Hence:
\begin{align*}
 P(d_n>t) & = \esp{}{P(d_n>t|Z)} \\
       & = \esp{}{P\left( \cup_{q\in [Q]} \cup_{i\in\mathcal{C}_q} \{|T_i-\overline{\pi}_q|>t \} | Z \right)} \\
       & \leq \esp{}{\sum_{q\in [Q]} \sum_{i\in \mathcal{C}_q}P(|T_i-\overline{\pi}_q|>t|Z)} \\
       & \leq \esp{}{\sum_{q\in [Q]} \sum_{i\in \mathcal{C}_q}P(|T_i-\overline{\pi}_q|>t|Z_i=q)} \\
       & \leq 2ne^{-2nt^2} 
\end{align*}

\end{proof}

\paragraph{Remark.} Furthermore $d_n$ almost surely converges to 0 because the upper bound is summable, by applying a usual consequence of the Borel-Cantelli lemma.

\subsection{Bound of the error probability (proof of Theorem \ref{algoconsistency})}

Thanks to the bound of the probability of large spreading, one can easily conclude that the ideal event $A_n\cap\{d\leq \frac{\delta}{4+\varepsilon}\}$ is actually strongly likely for $n$ large enough and for all $\varepsilon>0$:

\begin{proof}
First we have $A_n\cap\{d\leq \frac{\delta}{4+\varepsilon}\} \subset \overline{E}_n $ according to Proposition \ref{propevent}, hence:
$$ P(E_n) \leq P \left ( \overline{A_n\cap\{d_n\leq\frac{\delta}{4+\varepsilon}\}} \right )\leq P\left(d_n>\frac{\delta}{4+\varepsilon}\right) + P(\overline{A_n})$$
On the one hand, Proposition \ref{propdispersion} implies that:
\[ P\left(d_n>\frac{\delta}{4+\varepsilon}\right)\leq 2\exp\left(-2n\left(\frac{\delta}{4+\varepsilon}\right)^2 \right) \]
On the other hand $\overline{A}_n$ corresponds to ``There exists an empty class''. For all $q\in [Q]$, $N_q \sim \mathcal{B}(n,\alpha_q)$, hence:
\begin{align*} P(\overline{A}_n) & =P\left(\cup_{q\in [Q]}\{N_q=0\}\right) \\
& \leq \sum_{q\in [Q]} P(N_q=0) = \sum_{q\in [Q]}(1-\alpha_q)^{n}\leq Q(1-\alpha_0)^{n}.
\end{align*}
Once the both previous inequalities have been put together, we have an upper bound of $P(E_n)$ which depends on $\varepsilon$. The limit of the upper bound when $\varepsilon$ tends to zero yields the bound of the Theorem.
\end{proof}

\section{Consistency of the plug-in estimators\label{plugin}}
If the true classes were known, the usual moment estimators would be enough to estimate $(\alpha,\pi)$. Indeed the empirical proportions estimate $\alpha$ and the connection frequencies estimate the connection probabilities. We first prove that if we knew the classes, we would obtain a consistent estimate. However those variables are not observed but latent. That is why we plug the partition delivered by any consistent classification algorithm into these estimators. Notice that it does not depend on the choice of the consistent algorithm.

\paragraph{Notations} For all $q,r$ in $[Q]$, $\mathcal{C}_{qr}$ denotes $\mathcal{C}_q\times \mathcal{C}_r$, and $N_{qr}$ its cardinality.
If $q\neq r$, $N_{qr}=N_qN_r$ and if $q=r$, $N_{qq}=\frac{N_q(N_q-1)}{2}$. We define the following estimators:
$$\widetilde{\alpha}_q=\frac{N_q}{n} \text{ and }\widetilde{\pi}_{qr}=\frac{1}{N_{qr}}\sum\limits_{(i,j)\in \mathcal{C}_{qr}}X_{ij}$$
Recall that all of these variables are hidden thus far.

\subsection{Estimation with revealed classes}

  \begin{theorem} \label{revealedconsistency}
$(\widetilde{\alpha},\widetilde{\pi})$ is a consistent estimator of $(\alpha,\pi)$.
  \end{theorem}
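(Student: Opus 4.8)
## Proof plan for Theorem \ref{revealedconsistency}

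The plan is to treat the two estimators separately and, in each case, exhibit an explicit bound on the probability that the estimator deviates from its target, then show those bounds vanish as $n\to\infty$. Since $(\widetilde{\alpha},\widetilde{\pi})$ involves finitely many coordinates (with $Q$ fixed in this section), a union bound over $q$ and over pairs $(q,r)$ reduces everything to controlling one coordinate at a time.

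First I would handle $\widetilde{\alpha}_q=N_q/n$. Here $N_q=\sum_{i\in[n]}\mathbbm{1}\{Z_i=q\}$ is a sum of i.i.d.\ Bernoulli$(\alpha_q)$ variables, so \eqref{cctinq} applies directly: $P(|\widetilde{\alpha}_q-\alpha_q|>t)\leq 2e^{-2nt^2}$ for every $t>0$. A union bound over $q\in[Q]$ gives $P(\max_q|\widetilde{\alpha}_q-\alpha_q|>t)\leq 2Qe^{-2nt^2}\to 0$, hence $\widetilde{\alpha}$ is consistent. This step is routine.

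The estimator $\widetilde{\pi}_{qr}$ is the genuinely delicate one, because the number of summands $N_{qr}$ is itself random, and conditionally on $Z$ the average $\frac{1}{N_{qr}}\sum_{(i,j)\in\mathcal{C}_{qr}}X_{ij}$ is (for $q\neq r$) an average of $N_qN_r$ independent Bernoulli$(\pi_{qr})$ variables, while for $q=r$ it is an average of $\binom{N_q}{2}$ independent Bernoulli$(\pi_{qq})$ variables. The approach is to condition on $Z$ and split into two regimes. On the event $A_n\cap\{\min_q N_q\geq \lfloor\alpha_0 n/2\rfloor\}$ — which by \eqref{cctinq} applied to each $N_q$ has probability at least $1-2Qe^{-n\alpha_0^2/2}$ (up to integer-part adjustments) — we have $N_{qr}$ of order at least $c\,n^2$ for an explicit constant $c=c(\alpha_0)$, so conditionally on $Z$, Hoeffding's inequality \eqref{cctinq} with $N_{qr}\geq cn^2$ summands yields $P(|\widetilde{\pi}_{qr}-\pi_{qr}|>t\mid Z)\leq 2e^{-2cn^2t^2}$. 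Off this event we simply bound the probability by $2Qe^{-n\alpha_0^2/2}$. Integrating over $Z$ and taking a union bound over the at most $Q^2$ pairs $(q,r)$ gives $P(\max_{q,r}|\widetilde{\pi}_{qr}-\pi_{qr}|>t)\leq 2Q^2 e^{-2cn^2t^2}+2Q e^{-n\alpha_0^2/2}\to 0$ for every fixed $t>0$, which is the desired consistency.

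The main obstacle is precisely the randomness of $N_{qr}$: one must be careful that Hoeffding's inequality is stated for a \emph{fixed} number of i.i.d.\ terms, so the conditioning on $Z$ (equivalently, on the values of all the $N_q$) is essential, and one needs the lower bound $N_q\gtrsim \alpha_0 n$ to hold with high probability before the conditional bound is useful. A secondary point worth stating carefully is that for $q=r$ the summands $X_{ij}$, $i<j$, $i,j\in\mathcal{C}_q$, are indeed mutually independent given $Z$ (distinct unordered pairs), so \eqref{cctinq} applies with $n$ replaced by $\binom{N_q}{2}$; the degenerate case $N_q\leq 1$ is absorbed into the complement of the high-probability event. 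Once these points are in place, everything else is a routine union bound, and letting $n\to\infty$ (with $t$ fixed, then $t\downarrow 0$ along a sequence, or invoking that convergence in probability of each coordinate gives consistency) completes the argument.
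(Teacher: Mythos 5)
Your proposal is correct and follows the same overall strategy as the paper: Hoeffding applied directly to $N_q$ for $\widetilde{\alpha}$, and for $\widetilde{\pi}_{qr}$ a conditioning on the class assignment so that Hoeffding applies to the $N_{qr}$ conditionally independent Bernoulli summands, combined with a high-probability lower bound $N_{qr}\gtrsim n^2$. The one place you diverge is in how that lower bound is obtained: you restrict to the event $\{\min_q N_q\geq \alpha_0 n/2\}$, controlled by applying \eqref{cctinq} to each $N_q$ separately, whereas the paper bounds $\espc{}{e^{-2N_{qr}t^2}}$ by splitting the expectation over $\{|N_{qr}-\alpha_{qr}n^2|\leq r_n\}$ and its complement, invoking a dedicated concentration inequality for the \emph{product} $N_qN_r$ (Appendix \ref{cctprodbinom}) with a tunable sequence $r_n$. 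Your route is slightly more elementary (it needs no product-concentration lemma), and it yields the same qualitative conclusion for fixed parameters; the paper's formulation with a free sequence $r_n$ is what it later reuses, with $r_n=\sqrt{4n^3\ln n}$, to get sharper rates in the asymptotic regime where $\alpha_0^n\to 0$ and $Q_n\to\infty$, which a fixed cut-off at $\alpha_0 n/2$ handles less flexibly. Your remarks on the conditional independence of the $X_{ij}$ for $q=r$ and on absorbing the degenerate case $N_q\leq 1$ are correct and complete the argument.
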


\begin{proof}
For all $q\in [Q]$, $N_q$ is the sum of $n$ independent Bernoulli random variables with parameter $\alpha_q$. Applying directly the concentration inequality, we get for all $t>0$ and $q\in [Q]$: $P\left( \left| \frac{N_q}{n}-\alpha_q \right | >t \right) \leq 2e^{-2nt^2}$.
Applying the concentration inequality \eqref{cctinq} conditionally on $N_{qr}$ and then taking the expectation, we get for all $t>0$:
$$ P\left( \left| \widetilde{\pi}_{qr}-\pi_{qr} \right |>t \right )=\espc{}{P\left ( \left| \widetilde{\pi}_{qr}-\pi_{qr} \right |>t | N_{qr}\right )} \leq 2\esp{}{e^{-2N_{qr}t^2}} $$
 
 Define:
\[
 \alpha_{qr}=\alpha_q\alpha_r \text{ if } q\neq r \text{ and } \alpha_{qq} = \frac{\alpha_q^2}{2} \text{ if } q=r.
\]
Let $(r_n)$ be a non-negative sequence tending to infinity. We split up the support of the expectation into two pieces, depending on the values of $N_{qr}$. On the one hand the exponential term inside the expectation is bounded on the first piece of the support by a deterministic sequence. On the other hand, the probability of the support of the second piece of the expectation $\left\{|N_{qr}-\alpha_{qr}n^2|>r_n \right\}$ is accurately controlled by using the concentration inequality derived from \eqref{cctinq} in Appendix  \ref{cctprodbinom}.

\begin{align}
\espc{}{\exp(-2N_{qr}t^2)} &= \mathbb{E}\left[  \exp(-2N_{qr}t^2) \mathbbm{1}_{\{  | N_{qr} - \alpha_{qr}n^2 | \leq r_n \} } \right. \notag \\
& \hspace{2cm} \left. +\exp(-2N_{qr}t^2) \mathbbm{1}_{\{  | N_{qr} - \alpha_{qr}n^2 | > r_n \}  }  \right] \notag \\
&\leq \espc{}{\exp(-2t^2(\alpha_{qr}n^2-r_n))} + P(|N_{qr} - \alpha_{qr}n^2 | >r_n ) \notag \\
&\leq \exp(-2t^2(\alpha_{qr}n^2-r_n)) + P\left( \left| \frac{N_{qr}}{n^2} - \alpha_{qr} \right| >\frac{r_n}{n^2} \right) \notag \\
&\leq \exp\left[ - r_n t^2\left( \frac{n^2\alpha_0^2}{r_n}- 1 \right) \right] + 4\exp\left( -\frac12 \frac{r_n^2}{n^3} \right) \label{bound} \tag{B}
\end{align}

In order to have a vanishing bound \eqref{bound}, we just have to choose $(r_n)$ such that:
\[
 \varliminf_{n\to+\infty}\frac{\alpha_0^2 n^2}{r_n} > 1 \text{ and } \frac{r_n^2}{n^3} \xrightarrow[n\to+\infty]{} +\infty
\]

For example, $r_n = n^{7/4}$, hence:
\[
 \espc{}{\exp(-2N_{qr}t^2)} \leq \exp\left[ - n^{7/4} t^2 \left( n^{1/4}\alpha_0^2 - 1 \right) \right] + 4\exp\left( -\frac12\sqrt{n} \right)
\]



Then we conclude with a union bound:

\[
P(\|\widetilde{\pi}-\pi\|_{\infty}>t)\leq 2Q^2\left(e^{ - n^{7/4} t^2 \left( n^{1/4}\alpha_0^2 - 1 \right) }+4e^{-\frac12\sqrt{n}} \right) 
\]

Finally we conclude for all parameters:

\[
P(\|(\widetilde{\pi},\widetilde{\alpha})-(\alpha,\pi)\|_{\infty}>t)\leq 2Q^2\left(e^{ -n^{7/4} t^2 \left( n^{1/4}\alpha_0^2 - 1 \right) }+4e^{-\frac12\sqrt{n}} \right) + 2Qe^{-2nt^2} 
\]

\end{proof}

\subsection{Estimation with hidden classes}

We now assume that we have got a partition of the nodes $\{\widehat{\mathcal{C}}_q\}_q$ returned by any classification algorithm. The estimators $\widehat{\alpha}$ and $\widehat{\pi}$ are defined by plug-in with the estimated partition $\{\widehat{\mathcal{C}}_q\}_q$ instead of the true one $\{\mathcal{C}_q\}_q$. If the classification is right, then estimators both with hat and with tilde are equal.
\[
 \widehat{\alpha}_q=\frac{\widehat{N}_q}{n} \text{ and }\widehat{\pi}_{qr}=\frac{1}{\widehat{N}_{qr}}\sum\limits_{(i,j)\in \widehat{\mathcal{C}}_{qr}}X_{ij}
\]

\begin{theorem} 
If $\{\widehat{\mathcal{C}}_q\}_q$ is consistent, then $(\widehat{\alpha},\widehat{\pi})$ is a consistent estimator of $(\alpha,\pi)$.
\end{theorem}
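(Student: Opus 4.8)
The plan is to decompose the event according to whether the classification algorithm succeeds or fails, exactly as suggested by the remark that the hatted and tilde estimators coincide on the event $\overline{E}_n$. Fix $t>0$ and a label-matching permutation; I will suppress the identifiability-up-to-permutation issue, since on $\overline{E}_n$ the estimated partition literally equals the true one (after relabelling) and on $E_n$ we will simply bound the probability by $1$. Then I would write
\[
P\bigl(\|(\widehat{\alpha},\widehat{\pi})-(\alpha,\pi)\|_\infty>t\bigr)
\;\leq\; P\bigl(\{\|(\widehat{\alpha},\widehat{\pi})-(\alpha,\pi)\|_\infty>t\}\cap\overline{E}_n\bigr) + P(E_n).
\]
On $\overline{E}_n$ we have $(\widehat{\alpha},\widehat{\pi})=(\widetilde{\alpha},\widetilde{\pi})$, so the first term is at most $P(\|(\widetilde{\alpha},\widetilde{\pi})-(\alpha,\pi)\|_\infty>t)$, which is controlled by Theorem \ref{revealedconsistency}. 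The second term is controlled by Theorem \ref{algoconsistency} (or, in the growing-$Q$ regime, by Proposition \ref{classifrates}), since a consistent classification means $P(E_n)\to 0$.

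The key steps, in order, are: (i) invoke the identity $(\widehat{\alpha},\widehat{\pi})\mathbbm{1}_{\overline{E}_n}=(\widetilde{\alpha},\widetilde{\pi})\mathbbm{1}_{\overline{E}_n}$, already observed in the text just before the statement; (ii) apply the union-type bound above; (iii) bound the first summand by the explicit concentration inequality proved in Theorem \ref{revealedconsistency}, namely
\[
P(\|(\widetilde{\pi},\widetilde{\alpha})-(\alpha,\pi)\|_{\infty}>t)\leq 2Q^2\Bigl(e^{-n^{7/4} t^2 ( n^{1/4}\alpha_0^2 - 1 )}+4e^{-\frac12\sqrt{n}}\Bigr) + 2Qe^{-2nt^2};
\]
(iv) bound the second summand by $P(E_n)$, which tends to $0$ by the consistency assumption; (v) conclude that the sum tends to $0$ for every fixed $t>0$, which is the definition of consistency of $(\widehat{\alpha},\widehat{\pi})$.

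I do not expect a serious obstacle here; the only point requiring a little care is the bookkeeping around label permutations, since "consistent classification" is a statement about the unordered partition $\{\widehat{\mathcal{C}}_q\}_q$, whereas the estimator components $\widehat{\alpha}_q,\widehat{\pi}_{qr}$ are indexed by labels. The clean way to handle this is to note that on $\overline{E}_n$ the two partitions agree as partitions, hence there is a (random) permutation $\sigma$ of $[Q]$ identifying them, and then to state the consistency conclusion for $(\widehat{\alpha},\widehat{\pi})$ up to that permutation — which is the only sense in which any estimator of $(\alpha,\pi)$ can be consistent in this model anyway, as recalled in Section \ref{model}. With that convention the argument is just the two-line split above. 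If one wants a quantitative statement rather than a mere limit, one can keep the explicit right-hand side: $P(\|(\widehat{\alpha},\widehat{\pi})-(\alpha,\pi)\|_\infty>t)$ is bounded by the displayed bound from Theorem \ref{revealedconsistency} plus $2ne^{-\frac18 n\delta^2}+Q(1-\alpha_0)^{n+1}$ from Theorem \ref{algoconsistency}.
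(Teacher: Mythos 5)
Your proposal is correct and follows essentially the same route as the paper: the paper's proof is precisely the decomposition $P(B_t^n)\leq P(B_t^n\cap\overline{E}_n)+P(E_n)$, the identity $(\widehat{\alpha},\widehat{\pi})=(\widetilde{\alpha},\widetilde{\pi})$ on $\overline{E}_n$, and then Theorem \ref{revealedconsistency} for the first term and the consistency assumption for the second. Your extra remark on label permutations is a point the paper leaves implicit, but it does not change the argument.
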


\begin{proof}
For all $t>0$, let $B_t^n=\{ \left\| (\widehat{\alpha},\widehat{\pi})-(\alpha,\pi)\right\| >t \}$.
\begin{align*}
\forall t>0 \quad P(B_t^n) &= P(B_t^n \cap \overline{E}_n )+ P( B_t^n \cap E_n) \\
& \leq P(B_t^n \cap \overline{E}_n) + P(E_n)
\end{align*}
On the event $\overline{E}_n$, the equality $(\widehat{\alpha},\widehat{\pi})=(\widetilde{\alpha},\widetilde{\pi})$ holds, hence:
\[ \forall t>0 \: P(B_t^n) \leq P\left(\left\|(\widetilde{\alpha},\widetilde{\pi})-(\alpha,\pi)\right\|>t\right) + P(E_n).\]
The first term converges to $0$ according to Theorem \ref{revealedconsistency} and the second one as well, provided the algorithm is consistent (see Theorem \ref{algoconsistency}).
\end{proof}

\subsection{Conclusions}

The previous paragraphs did not depend on the algorithm chosen. Now putting together the results of the previous section and the results concerning the LG algorithm, we get:

\begin{theorem} \label{finalconsistency}
For all $t>0$ 
\begin{multline*} P(\|(\widehat{\pi},\widehat{\alpha})-(\alpha,\pi)\|_{\infty}>t) \leq 2Q^2\left(e^{ -n^2 t^2 \left( \alpha_0^2 - n^{-1/4} \right) }+4e^{-\frac12\sqrt{n}} \right) + 2Qe^{-2nt^2}  \\ + 2ne^{-\frac{1}{8}n\delta^2} + Q (1-\alpha_0)^{n}
\end{multline*}
\end{theorem}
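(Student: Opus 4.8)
The plan is to assemble Theorem \ref{finalconsistency} by combining the two ingredients already proved: the bound on the classification error probability $P(E_n)$ from Theorem \ref{algoconsistency}, and the deviation bound for the revealed-class estimators $(\widetilde{\alpha},\widetilde{\pi})$ from Theorem \ref{revealedconsistency}, via the conditioning argument used in the previous subsection. Concretely, for fixed $t>0$ set $B_t^n=\{\|(\widehat{\alpha},\widehat{\pi})-(\alpha,\pi)\|_\infty>t\}$ and split on whether the LG algorithm errs:
\[
P(B_t^n)\leq P(B_t^n\cap\overline{E}_n)+P(E_n).
\]
On $\overline{E}_n$ the estimated partition coincides with the true one, so $(\widehat{\alpha},\widehat{\pi})=(\widetilde{\alpha},\widetilde{\pi})$ there, giving $P(B_t^n\cap\overline{E}_n)\leq P(\|(\widetilde{\alpha},\widetilde{\pi})-(\alpha,\pi)\|_\infty>t)$.

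Next I would substitute the explicit bounds. For the first term, Theorem \ref{revealedconsistency} (with the choice $r_n=n^{7/4}$) gives
\[
P(\|(\widetilde{\pi},\widetilde{\alpha})-(\alpha,\pi)\|_{\infty}>t)\leq 2Q^2\left(e^{-n^{7/4}t^2(n^{1/4}\alpha_0^2-1)}+4e^{-\frac12\sqrt{n}}\right)+2Qe^{-2nt^2}.
\]
Rewriting the exponent $n^{7/4}t^2(n^{1/4}\alpha_0^2-1)=n^2t^2(\alpha_0^2-n^{-1/4})$ matches the form displayed in the theorem. For the second term, since the LG classification is consistent, Theorem \ref{algoconsistency} gives $P(E_n)\leq 2ne^{-\frac18 n\delta^2}+Q(1-\alpha_0)^{n+1}\leq 2ne^{-\frac18 n\delta^2}+Q(1-\alpha_0)^{n}$. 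Adding the two displays yields exactly the claimed multi-line inequality.

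There is essentially no obstacle here: the result is a bookkeeping assembly of earlier estimates, and the only points requiring care are cosmetic — namely checking that the algebraic rewriting $n^{7/4}(n^{1/4}\alpha_0^2-1)=n^2\alpha_0^2-n^{7/4}=n^2(\alpha_0^2-n^{-1/4})$ is carried out correctly inside the exponent, and that the union-over-parameters constant $2Q^2$ (for $\pi$) plus $2Q$ (for $\alpha$) is propagated faithfully. One should also note that the bound is stated for every fixed $t>0$ and every finite $n$, so no limiting argument is needed; consistency of $(\widehat{\alpha},\widehat{\pi})$ then follows by observing that each of the four terms on the right-hand side tends to $0$ as $n\to\infty$ (in the fixed-parameter regime), but the theorem as stated is just the finite-$n$ inequality itself.
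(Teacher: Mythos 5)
Your proposal is correct and matches the paper's own (implicit) argument exactly: the paper obtains Theorem \ref{finalconsistency} by "putting together" the decomposition $P(B_t^n)\leq P(\|(\widetilde{\alpha},\widetilde{\pi})-(\alpha,\pi)\|_\infty>t)+P(E_n)$ from the hidden-classes subsection with the explicit bounds of Theorems \ref{revealedconsistency} and \ref{algoconsistency}, and your algebraic rewriting of the exponent is accurate.
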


Note that the estimation procedure requires larger graphs to achieve consistency than does the classification procedure with the LG algorithm alone. This is basically due to the variability of the empirical proportions. Since the upper bound is summable, a usual consequence of the Borel-Cantelli lemma implies the strong consistency of these estimators.

\paragraph{Discussion.} We now consider the asymptotic framework $\mathcal{G}(n,\alpha^n,\pi^n)$, as we already did in paragraph \ref{mainresult}. The previous bound above is very interesting when $\varliminf\alpha_0^n>0$ and then $\varlimsup Q_n<+\infty$, because it allows strong consistency for example. If we want just consistency, we can change the bound so that the convergence rates of $(\alpha_0^n)$ and $(Q_n)$ are more optimal in our asymptotic framework.

\begin{prop}
The inference method with LG algorithm is still consistent under Assumptions \eqref{A1}, \eqref{A2} and \eqref{A4}, where

\begin{enumerate}[(a)]
 \setcounter{enumi}{3}
 \item \label{A4} $\displaystyle \varliminf_{n\to+\infty} \alpha_0^n\left(\frac{n}{\ln n}\right)^{1/4} > \sqrt{2}$.
\end{enumerate}

\end{prop}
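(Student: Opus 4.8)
The plan is to reuse the machinery of Proposition \ref{classifrates}, keeping assumptions \eqref{A1} and \eqref{A2} but replacing the condition on the proportions \eqref{A3} by the sharper \eqref{A4}. The starting point is the error bound of Theorem \ref{finalconsistency}, whose four terms must each be shown to vanish under $\mathcal{G}(n,\alpha^n,\pi^n)$. The term $2ne^{-\frac18 n\delta_n^2}$ vanishes exactly as in the proof of Proposition \ref{classifrates}: assumption \eqref{A1} gives a constant $C>2\sqrt2$ with $n\delta_n^2/\ln n\geq C^2$ eventually, so $n\exp(-\frac18 n\delta_n^2)=\exp[-\frac18\ln n\,(n\delta_n^2/\ln n-8)]\to 0$. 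Likewise \eqref{A2} is the right control on $Q_n$ (and, as noted, \eqref{hypo} forces $(Q_n-1)\delta_n\le 1$, so \eqref{A1} already implies $Q_n=O(\sqrt{n/\ln n})$, making \eqref{A2} essentially automatic). The term $2Qe^{-2nt^2}$ is dominated by $Q_n e^{-2nt^2}$; since $Q_n=O(\sqrt{n/\ln n})$ and $t$ is fixed, $\ln Q_n=O(\ln n)$ is swamped by $-2nt^2$, so this term $\to 0$ for every fixed $t>0$.

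The crux is the first term, $2Q_n^2\bigl(e^{-n^2 t^2(\alpha_0^{n\,2}-n^{-1/4})}+4e^{-\frac12\sqrt n}\bigr)$. The piece $2Q_n^2\cdot 4e^{-\frac12\sqrt n}$ is harmless, because $\ln(Q_n^2)=O(\ln n)=o(\sqrt n)$. For the remaining piece one must show $\ln(Q_n^2)-n^2 t^2(\alpha_0^{n\,2}-n^{-1/4})\to-\infty$. Here is exactly where \eqref{A4} enters: it guarantees a constant $c>\sqrt2$ with $\alpha_0^n\ge c\,(\ln n/n)^{1/4}$ eventually, hence $n^2\alpha_0^{n\,2}\ge c^2 n^{3/2}\sqrt{\ln n}$, which dominates both $n^{2-1/4}=n^{7/4}\cdot$(lower order) — wait, one must be a little careful: $n^2\cdot n^{-1/4}=n^{7/4}$ grows faster than $n^{3/2}\sqrt{\ln n}$. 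So the naive bound is not quite enough, and one should instead use the better term coming from the $r_n$-optimization in the proof of Theorem \ref{revealedconsistency}: choosing $r_n$ in line \eqref{bound} adapted to the true order of $\alpha_0^n$ (rather than the fixed $r_n=n^{7/4}$) yields a bound of the form $\exp[-r_n t^2(n^2\alpha_0^{n\,2}/r_n-1)]+4\exp(-\tfrac12 r_n^2/n^3)$, and it suffices that $\varliminf \alpha_0^{n\,2}n^2/r_n>1$ together with $r_n^2/n^3\to\infty$. With $\alpha_0^{n\,2}n^2$ of order $n^{3/2}\sqrt{\ln n}$, one can take e.g. $r_n=n^{3/2}$: then $\alpha_0^{n\,2}n^2/r_n\ge c^2\sqrt{\ln n}\to\infty>1$, and $r_n^2/n^3=1$ — not quite $\to\infty$, so push to $r_n=n^{3/2}\ln n$ or so, giving $r_n^2/n^3=\ln^2 n\to\infty$ while $\alpha_0^{n\,2}n^2/r_n\ge c^2/\sqrt{\ln n}$, which does go to $0$, not infinity. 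The honest conclusion is that the balance is delicate, and the cleanest route is to pick $r_n=n^{3/2}\sqrt{\ln n}/\omega_n$ with $\omega_n\to\infty$ slowly enough that both $\alpha_0^{n\,2}n^2/r_n=c^2\omega_n\to\infty$ and $r_n^2/n^3=(\ln n)/\omega_n^2\to\infty$; taking $\omega_n=(\ln n)^{1/4}$ works since then $\alpha_0^{n\,2}n^2/r_n\gtrsim(\ln n)^{1/4}\to\infty$ and $r_n^2/n^3=\sqrt{\ln n}\to\infty$.

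Concretely, the steps in order are: (i) state that assumptions \eqref{A1}, \eqref{A2} dispose of the $\delta_n$- and $Q_n$-terms in Theorem \ref{finalconsistency} exactly as in Proposition \ref{classifrates}, and that $2Qe^{-2nt^2}\to0$ trivially; (ii) redo the expectation estimate \eqref{bound} from the proof of Theorem \ref{revealedconsistency} with a tailored sequence $r_n=n^{3/2}(\ln n)^{1/4}$ (or any $r_n$ satisfying $\alpha_0^{n\,2}n^2/r_n\to\infty$ and $r_n^2/n^3\to\infty$), using \eqref{A4} to verify $\alpha_0^{n\,2}n^2\gtrsim n^{3/2}\sqrt{\ln n}$; (iii) multiply by $2Q_n^2$ and note $\ln Q_n=O(\ln n)$ is absorbed by both exponential rates; (iv) conclude all four terms of the bound vanish, hence $P^n_{\alpha^n,\pi^n}(E_n)\to0$, i.e. the method is consistent. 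The main obstacle is precisely step (ii): the fixed choice $r_n=n^{7/4}$ used earlier is calibrated for $\alpha_0$ bounded away from $0$ and is wasteful here, so one has to re-optimize the split of the expectation to extract the $(n/\ln n)^{1/4}$ threshold in \eqref{A4}; everything else is bookkeeping. It would be worth remarking that the exponent $\sqrt2$ in \eqref{A4} is exactly what makes $\alpha_0^{n\,2}n^2/n^{7/4}=\alpha_0^{n\,2}n^{1/4}\ge 2\sqrt{\ln n}\to\infty$ fail to hold — hence the need for the re-optimization rather than a direct appeal to Theorem \ref{finalconsistency} — and that \eqref{A4} is weaker than \eqref{A3} in the relevant regime, which is the whole point of the proposition.
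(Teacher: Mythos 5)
Your overall strategy is the paper's: keep \eqref{A1}--\eqref{A2} for the $\delta_n$- and $Q_n$-terms, and re-optimize the cut-off $r_n$ in the bound \eqref{bound} because the fixed choice $r_n=n^{7/4}$ is calibrated for $\alpha_0$ bounded away from zero. That diagnosis is correct. But your concrete choice of $r_n$ breaks the argument. With $r_n=n^{3/2}\sqrt{\ln n}/\omega_n$ and $\omega_n=(\ln n)^{1/4}$ you get $r_n^2/n^3=\sqrt{\ln n}$, so after the union bound over the $Q_n^2$ pairs $(q,r)$ the second piece is of order $Q_n^2\exp\bigl(-\tfrac12\sqrt{\ln n}\bigr)$; since $Q_n^2$ may be of order $n/\ln n$, this term \emph{diverges} ($\ln Q_n^2=O(\ln n)$ is not absorbed by $\tfrac12\sqrt{\ln n}$, contrary to what you assert in step (iii)). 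The source of the error is that you upgraded the sufficient condition $\varliminf n^2(\alpha_0^n)^2/r_n>1$ to the stronger requirement $n^2(\alpha_0^n)^2/r_n\to\infty$, which forces $r_n=o(n^{3/2}\sqrt{\ln n})$ and hence $r_n^2/n^3=o(\ln n)$ --- too small to beat the $Q_n^2$ multiplier.

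The paper's resolution is to sit exactly at the critical scale: $r_n=\sqrt{4n^3\ln n}=2n^{3/2}\sqrt{\ln n}$. Then $r_n^2/n^3=4\ln n$, so the second piece contributes $Q_n^2\cdot 4n^{-2}\to 0$ under \eqref{A2}; and \eqref{A4} gives $C>\sqrt2$ with $(\alpha_0^n)^2\sqrt{n/(4\ln n)}-1\geq C^2/2-1>0$ eventually, so the first piece decays like $\exp\bigl(-c\,t^2\sqrt{n^3\ln n}\bigr)$, which crushes $\ln Q_n^2$. More generally any $r_n=a\,n^{3/2}\sqrt{\ln n}$ with $\sqrt2<a<C^2$ works, and this is precisely where the constant $\sqrt2$ in \eqref{A4} comes from: it is the smallest threshold leaving a nonempty window for $a$. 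You should also say a word about the last term $Q_n(1-\alpha_0^n)^n$ of Theorem \ref{finalconsistency}, which you list among the four terms but never treat; it is harmless since $(1-\alpha_0^n)^n\leq e^{-n\alpha_0^n}$ and $n\alpha_0^n\gtrsim n^{3/4}(\ln n)^{1/4}$ dominates $\ln Q_n$. Everything else in your write-up (the treatment of \eqref{A1}, \eqref{A2}, the $2Q_ne^{-2nt^2}$ term, and the observation that $2Q_n^2\cdot 4e^{-\frac12\sqrt n}$ is harmless) is fine.
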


\begin{proof}
First of all, we consider the bound \eqref{bound} in the proof of Theorem \ref{revealedconsistency}, and this time, we take $r_n = \sqrt{4n^3\ln n}$, so that it yields the following bound:
\begin{multline} \label{bound2} \tag{B'}
 P(\|(\widetilde{\pi},\widetilde{\alpha})-(\alpha^n,\pi^n)\|_{\infty}>t)\leq \\
2Q_n^2\left(\exp\left[ -2t^2\sqrt{n^3\ln n}\left( \left(\alpha_0^n\right)^2 \sqrt{\frac{n}{4\ln n}}-1\right) \right] +\frac{4}{n} \right) + 2Q_ne^{-2nt^2} 
\end{multline}

 Assumption \eqref{A2} is  sufficient to show the convergence of $2Qe^{-2nt^2}$ and $2Q_n^2\frac{4}{n}$. Assumptions \eqref{A2} and \eqref{A4} have to be proved sufficient for the remaining term of the bound \eqref{bound2}. Assumption \eqref{A4} implies that there exists $C>\sqrt{2}$ such that for $n$ large enough:
\[
\alpha_0^n \left({\frac{n}{\ln n}}\right)^{1/4} \geq C, \text{ hence }  (\alpha_0^n)^2 \sqrt{\frac{n}{4\ln n}} -1 \geq \frac{C^2}{2} -1 >0
\]
It is easily deduced from this that the first term of the bound \eqref{bound2} therefore converges to zero. 

Moreover, note that the convergence of this term implies the convergence of $Q_n(1-\alpha_0^n)^n$ as well. Recall that Assumption \eqref{A1} implies the convergence of $2e^{-\frac{1}{8}n\delta_n^2}$. As a conclusion, the consistency holds.

\end{proof}

\section{Simulation study\label{simu}}

Our main purpose in this study is to figure out how the LG algorithm behaves in practice, and above all, to check whether the bounds of Theorem \ref{algoconsistency} are pessimistic or not. The empirical frequency of the graphs with no error would be of great interest, because that is the quantity the bound concerns. But actually this frequency has no smooth evolution: it suddenly shifts from 0 to almost 1. We shall use two types of error rates: a global one and one for each class, so as to examine more accurately the results given by the algorithm.
%
%

\subsection{Simulation design}

The parameters used in the simulation are:

\[ \alpha = (0.3\ 0.6\ 0.1) \quad \pi=\left( \begin{array}{ccc}
                                           0.95 & 0.4 & 0.4  \\
					   0.4 & 0.7 & 0.75   \\
					   0.4 & 0.75 & 0.65
                                          \end{array} \right) \]
Hence $\overline{\pi}=(0.565\ 0.615\ 0.635)$ and $\delta=0.02$.

The evolutions of the classification error rates and the estimators  with respect to the number of nodes $n$ are averaged over 1000 graphs drawn from $\mathcal{G}(n,\alpha,\pi)$ and displayed from 1000 to 60000 nodes.

First of all, the global error rate $g_n$ is defined as the proportion of node pairs $(i,j)$, either classified in distinct classes whereas their true labels are identical, or classified together whereas their true labels are different. That is, denoting $\widehat{Z}$ the label vector returned by the LG algorithm:

\[ g_n(Z,\widehat{Z}) = \frac{2}{n(n-1)}\sum_{1\leq i<j\leq n} \left( \mathbbm{1}_{Z_i=Z_j}\mathbbm{1}_{\widehat{Z_i}\neq\widehat{Z_j}}+\mathbbm{1}_{Z_i\neq Z_j}\mathbbm{1}_{\widehat{Z_i}=\widehat{Z_j}} \right) \]

Secondly, we also propose error rates for each class. Define $I_q$, resp. $M_q$, the rate of intruders (or false positive rate) in the class $q$ predicted by the algorithm, resp. the rate of missing nodes of the true class $q$ (or false negative rate):


\[ I_q^n(Z,\widehat{Z})=\frac{1}{\widehat{N}_q} \sum_{i\in \widehat{\mathcal{C}}_q } \mathbbm{1}_{Z_i\neq q} \text{ and } M_q^n(Z,\widehat{Z})=\frac{1}{N_q} \sum_{i\in \mathcal{C}_q} \mathbbm{1}_{\widehat{Z}_i \neq q} \]

The algorithm gives labels to the nodes in order of increasing degree. Indeed the true labels are expected to be sorted this way, because $\overline{\pi}_1<\overline{\pi}_2<\overline{\pi}_3$. This partially solves the label switching problem which arises when trying to identify the true labels instead of the equivalence classes.

\subsection{Results}

\begin{figure}[h]
  \caption{Evolution of the average global error rate $g_n$ as a function of the graph size}
 \begin{center}
  \includegraphics[scale=0.45]{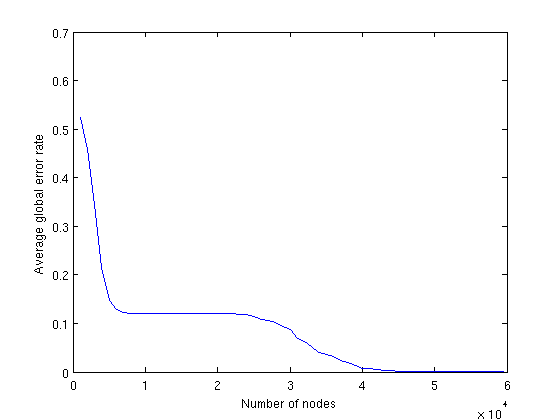}
 \end{center}
\end{figure}

The evolution is quite satisfactory because the error rate completely vanishes from $n=45000$ nodes, which is even earlier than expected from the bound of Theorem \ref{algoconsistency}. Indeed this bound predicted that the probability of at least one error would not be less than 0.05 earlier than $n=300000$. The bound seems to be pessimistic, basically because of the union bound, used in the proof of Proposition \ref{propdispersion}. After a dramatic decrease at the beginning, the evolution encounters a slight stagnation between $n=10000$ and $n=20000$ nodes. An interpretation of this transitional phase can be given with the error rates for each class.


\begin{figure}[h]
\caption{Error rates $I_q^n$ and $M_q^n$}
\hspace{-0.3cm}
\tabcolsep 0.2pt
\begin{tabular}{cc}
 \includegraphics[scale=0.425]{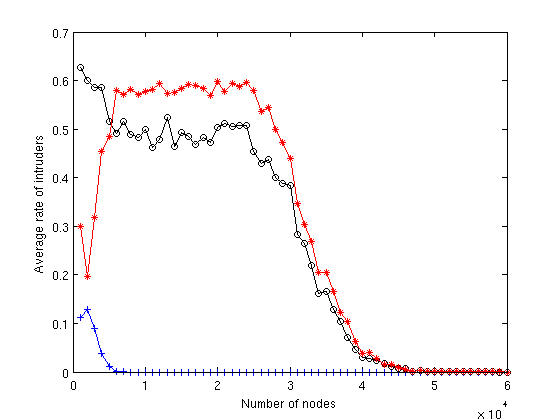} \hspace{-3cm} \raisebox{25ex}{ \begin{tabular}{|c|}
									      \hline
										$\textcolor{blue}{+}$: Class 1 \\
										$\circ$: Class 2 \\
										$\textcolor{red}{*}$: Class 3 \\
									      \hline
									      \end{tabular} } \hspace{0.2cm} &  \includegraphics[scale=0.425]{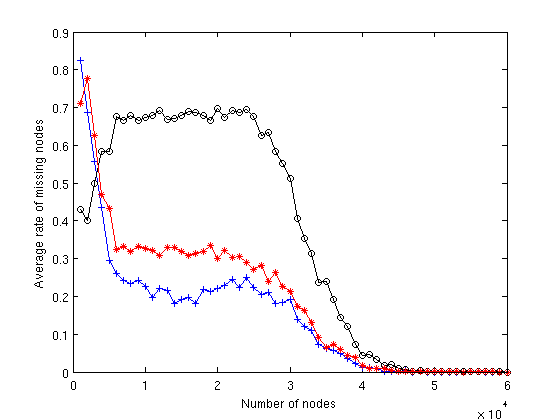}
\end{tabular}
\end{figure}

The first class is much better detected even at low graph sizes, unlike class 2 and class 3. Indeed it is sufficient that the maximal intraclass distance $d_n$ is less than  $(\overline{\pi}_2-\overline{\pi}_1)/4$ to detect this class, whereas the other two are not supposed to be separated before \[ d_n<\frac{\overline{\pi}_3-\overline{\pi}_2}{4} =\frac{\delta}{4}<\frac{\overline{\pi}_2-\overline{\pi}_1}{4} \] according to our previous study. That is the reason why the global error rate dramatically decreases until reaching $n=10000$ nodes, and why it does not vanish before reaching $n=25000$. Note that the bound of Theorems \ref{propdispersion} and \ref{algoconsistency} had not predicted this before reaching $n=50000$ and $n=264000$ respectively.

\begin{figure}[!h]
\caption{Estimators}
\hspace{-1cm}
\tabcolsep 0.2pt
\begin{tabular}{cc}
 \includegraphics[scale=0.425]{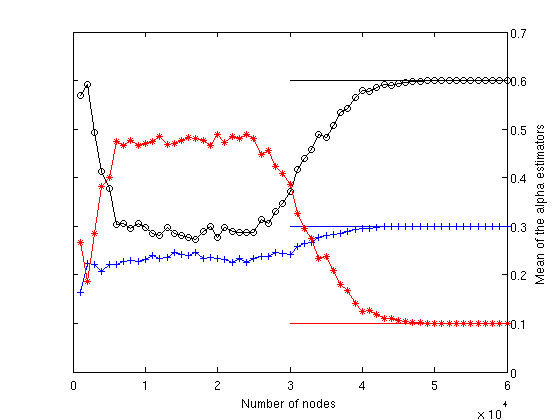} & \includegraphics[scale=0.425]{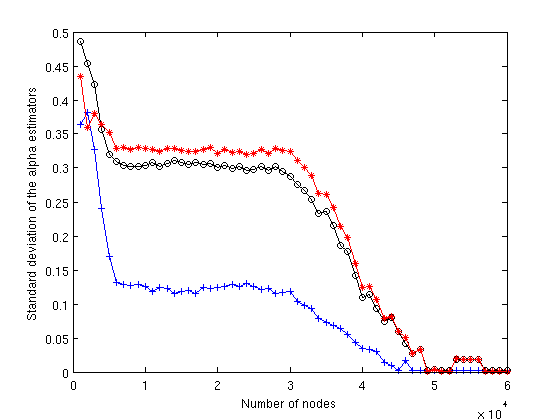}  \hspace{-2.8cm} \raisebox{25ex}
											      { \begin{tabular}{|c|}
											      \hline
												$\textcolor{blue}{+}$: Class 1 \\
												$\circ$: Class 2 \\
												$\textcolor{red}{*}$: Class 3 \\
											      \hline
											      \end{tabular} } \\
Mean of $\widehat{\alpha}$ & Standard deviation of $\widehat{\alpha}$ 
\end{tabular}

\hspace{-1cm}
\begin{tabular}{cc}
 \includegraphics[scale=0.425]{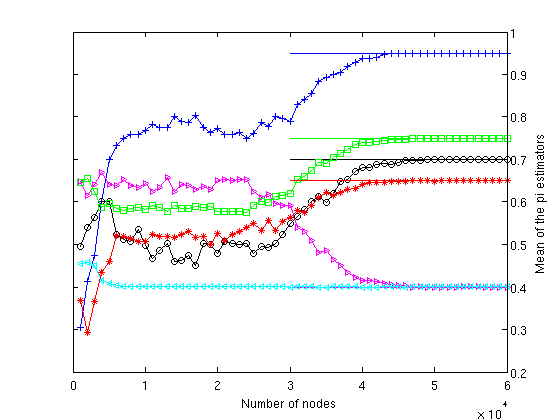} &  \includegraphics[scale=0.425]{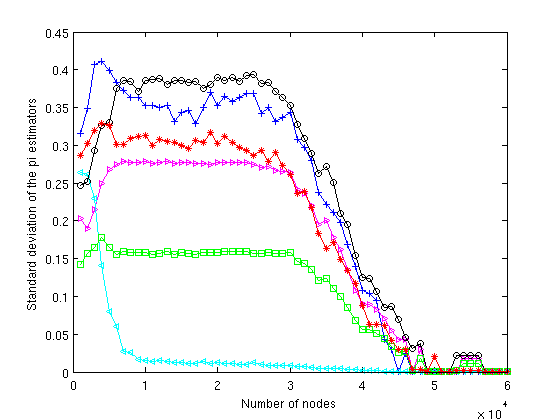} \hspace{-2.2cm}
								\raisebox{21ex}{ \begin{tabular}{|c|}
                                                                \hline
                                                                $\textcolor{blue}{+}$: 1-1 \\
								$\circ$: 2-2 \\
								$\textcolor{red}{*}$: 3-3 \\
								$\textcolor{magenta}{\rhd}$: 1-2 \\
								$\textcolor{cyan}{\lhd}$: 1-3 \\
								$\textcolor{green}{\square}$: 2-3 \\
								\hline
                                                               \end{tabular} } \\
Mean of $\widehat{\pi}$ & Standard deviation of $\widehat{\pi}$
\end{tabular}
\end{figure}

%
%

In short, as long as the tails of the normalized degree distribution are overlapping, the classes are mixed and cannot be properly detected. The curves show in particular that many nodes of class 2 seem to be caught by class 3. Indeed there are many intruders from class 2 in class 3. The missing nodes of class 1 are likely caught by class 2. As a consequence, the proportion of classes 1 and 2 are underestimated in the transitional phase, whereas the proportion of class 3 is overestimated. The inversion of classes 2 and 3 is shown again on graphic 4.1, as on 3.1.

\section{Model selection \label{selection}}

 Up to this section, the number of classes was supposed to be known and was an input parameter of the LG algorithm. Our main purpose hereafter is to examine more accurately the sequence of the gaps sorted in increasing order and then the sequence of the intervals between the means of the groups given by the LG algorithm, depending on the selected number of classes $Q$ for the model. As an application of this study, we finally show that degrees are likewise sufficient to asymptotically select the right number of classes.

\subsection{Study of the gap sequence}


We will use the same notations as in the last section. Moreover $Q_0$ denotes the true number of classes, and $Q$ the current input parameter of the LG algorithm. We will often use the event $B_n=A_n\cap\{d_n\leq\frac{\delta}{5}\}$, where no class is empty and the dispersion $d_n$ is so small that the $Q_0-1$ largest intervals separate the true classes (see Proposition \ref{propevent} with $\varepsilon=1$). Then we can affirm that two normalized degrees are in the same class if and only if their distance is less than $2d_n$. 

Let $(G_q^n)_{q\in [n-1]}$ be the sequence of the distances between consecutive normalized degrees $(T^n_{(i+1)}-T^n_{(i)})_{i\in[n-1]}$, but sorted in decreasing order:
\[
 G^n_1 \geq G^n_2 \geq \dots \geq G^n_{n-1} 
\]
  The $Q_0-1$ largest gaps in the LG algorithm have lengths $G_1,\dots, G_{Q_0-1}$. Define also $(\gamma_q)_{q\in [Q_0-1]}$ the sequence $(\overline{\pi}_{(q+1)}-\overline{\pi}_{(q)})_{q\in [Q_0-1]}$, sorted in decreasing order. This is called the sequence of the theoretical gaps. The following theorem states that largest empirical gaps converge to the corresponding theoretical gaps, which enforces our intuition about the model.

\begin{theorem} \label{theogaps}
 For all $q< Q_0$, $G_q \xrightarrow[n\to+\infty]{} \gamma_q$ a.s.
\end{theorem}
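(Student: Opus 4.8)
The strategy is to work on the highly likely event $B_n = A_n \cap \{d_n \le \delta/5\}$, on which the LG algorithm's gap structure is almost completely pinned down, and then to show that each of the $Q_0-1$ largest empirical gaps $G_q$ is squeezed between $\gamma_q - 2d_n$ and $\gamma_q + 2d_n$. Since Proposition \ref{propdispersion} gives $P(d_n > t) \le 2ne^{-2nt^2}$, the Borel--Cantelli argument already invoked in the remark after that proposition shows $d_n \to 0$ a.s.; combined with $P(\overline{A}_n) \le Q_0(1-\alpha_0)^n$ (also summable), we have $\mathbbm{1}_{B_n} \to 1$ a.s. So it suffices to prove a deterministic statement: on $B_n$, $|G_q - \gamma_q| \le 2d_n$ for every $q < Q_0$.

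**Key steps.** First, on $B_n$ the true classes are non-empty and $d_n$ small, so (by the argument in Proposition \ref{propevent} with $\varepsilon = 1$) two normalized degrees lie in the same true class iff their distance is $< 2d_n$, and the $Q_0-1$ largest gaps of the sorted sequence $T_{(1)} \le \dots \le T_{(n)}$ are precisely the $Q_0-1$ ``between-class'' gaps, each of length $> 2d_n$, while every within-class gap is $\le 2d_n$. Hence on $B_n$ the multiset $\{G_1, \dots, G_{Q_0-1}\}$ equals the multiset of between-class gaps. Second, I would show each between-class gap is close to the corresponding theoretical gap: if classes with ordered means $\overline{\pi}_{(q)} < \overline{\pi}_{(q+1)}$ are adjacent in the sorted degree sequence, then the largest degree of the lower class is within $d_n$ of $\overline{\pi}_{(q)}$ and the smallest degree of the upper class within $d_n$ of $\overline{\pi}_{(q+1)}$, so that gap is in $[\overline{\pi}_{(q+1)} - \overline{\pi}_{(q)} - 2d_n,\ \overline{\pi}_{(q+1)} - \overline{\pi}_{(q)} + 2d_n]$; summing these gaps telescopes to confirm the groups appear in the order of their means, so the between-class gaps are exactly the $Q_0-1$ consecutive-mean differences $\overline{\pi}_{(q+1)} - \overline{\pi}_{(q)}$, each perturbed by at most $2d_n$. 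Third, since sorting a finite list of reals is $1$-Lipschitz (in the sense that if $|a_i - b_i| \le \eta$ for all $i$ then the sorted versions satisfy $|a_{(i)} - b_{(i)}| \le \eta$), the decreasing rearrangement gives $|G_q - \gamma_q| \le 2d_n$ for all $q < Q_0$. Letting $n \to \infty$ and using $d_n \to 0$ a.s. on the a.s.-eventual event $B_n$ finishes the proof.

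**Main obstacle.** The routine part is the $d_n$-perturbation bookkeeping; the one genuinely delicate point is justifying that, on $B_n$, the groups of normalized degrees really do appear in the sorted sequence in the order dictated by their means (so that the between-class gaps are the \emph{consecutive}-mean differences and not some other differences). This needs the separation $|\overline{\pi}_q - \overline{\pi}_r| \ge \delta$ together with $d_n \le \delta/5 < \delta/2$ to guarantee the intervals $[\overline{\pi}_q - d_n, \overline{\pi}_q + d_n]$ are pairwise disjoint and linearly ordered exactly as the means are, after which each group of degrees is confined to its own interval and the ordering follows. Once that is in place, the rest is the Lipschitz-sorting remark plus the already-established almost-sure convergence $d_n \to 0$ and eventual occurrence of $A_n$.
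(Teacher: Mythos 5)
Your proposal is correct, and its first half (on $B_n=A_n\cap\{d_n\le\delta/5\}$ the $Q_0-1$ largest empirical gaps are exactly the between-class gaps, each within $2d_n$ of the corresponding consecutive difference of the ordered means) coincides with the paper's inequality $|G_{S(q)}-\gamma_q|\le 2d_n$. Where you genuinely diverge is in the step you correctly identify as the delicate one: matching the $q$-th \emph{largest} empirical gap with the $q$-th \emph{largest} theoretical gap. The paper handles this by introducing the random permutation $S$, a threshold $\eta=\frac15\bigl(\min_q(\gamma_q-\gamma_{q+1})\wedge\delta\bigr)$, and Lemma \ref{lemme}, showing that once $d_n\le\eta$ the permutation $S$ can only move indices within $\sim_\gamma$-equivalence classes; the price is an extra term $2e^{-2n\eta^2}$ and a union bound of size $Q_0$ in the final probability estimate. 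You instead invoke the $\ell^\infty$-contractivity of the decreasing-rearrangement map: if two finite lists are termwise within $2d_n$, so are their sorted versions (provable in one line via $a^{\downarrow}_k=\max_{|I|=k}\min_{i\in I}a_i$). This yields the deterministic bound $|G_q-\gamma_q|\le 2d_n$ on $B_n$ directly, dispenses with Lemma \ref{lemme} and the permutation bookkeeping, handles ties among the $\gamma_q$ for free, and gives a tail bound $P(|G_q-\gamma_q|>t)\le P(2d_n>t)+P(\overline{B}_n)$ that does not degrade when two theoretical gaps are close (no $\eta$-dependence). The only thing you should add for completeness is an explicit statement and one-line proof of the rearrangement-contractivity fact, since the paper does not contain it; with that included, your argument is a strictly cleaner route to the same conclusion.
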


Refer to Appendix \ref{theogapsproof} to see the proof. One can easily realize that the only gap (among the $Q_0-1$ largest) lying between $\overline{\pi}_{(q)}$ and $\overline{\pi}_{(q+1)}$ converges to $\overline{\pi}_{(q+1)}-\overline{\pi}_{(q)}$. However the index of this interval is random and depends on $n$. This interesting but technical problem is solved in the second part of the proof. For the moment we provide a weaker version of this theorem, the proof of which is much simpler. Its conclusion is sufficient for our purposes.


\begin{theorem} \label{theogapsweak}
 For all $q<Q_0$, $\displaystyle \varliminf_{n\to+\infty} G_q > 0$
\end{theorem}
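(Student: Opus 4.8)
The plan is to work on the event $B_n=A_n\cap\{d_n\le \delta/5\}$, which by Proposition \ref{propdispersion} and the bound on $P(\overline A_n)$ has probability tending to $1$ fast enough to be summable; hence by Borel--Cantelli it holds eventually almost surely, and it suffices to produce a deterministic lower bound for $G_q$ ($q<Q_0$) that is valid on $B_n$. On $B_n$ no true class is empty, so for each $q<Q_0$ there is at least one node with normalized degree within $d_n\le\delta/5$ of $\overline\pi_{(q)}$ and at least one within $d_n$ of $\overline\pi_{(q+1)}$. Consecutive along the sorted list, somewhere strictly between the block of class-$(q)$ degrees and the block of class-$(q+1)$ degrees there is a gap $T_{(i+1)}-T_{(i)}$ of length at least
\[
(\overline\pi_{(q+1)}-\overline\pi_{(q)}) - 2d_n \;\ge\; \delta - \frac{2\delta}{5} \;=\; \frac{3\delta}{5}.
\]
Doing this for every $q\in\{1,\dots,Q_0-1\}$ produces $Q_0-1$ distinct gaps (they separate distinct pairs of consecutive classes, hence cannot coincide) each of length at least $3\delta/5$. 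Therefore the $(Q_0-1)$-th largest gap satisfies $G_{Q_0-1}\ge 3\delta/5$, and a fortiori $G_q\ge 3\delta/5$ for all $q<Q_0$ on $B_n$.

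Combining the two observations: on the event $\liminf$ of the $B_n$'s — which has probability $1$ — we have $G_q\ge 3\delta/5$ for all $n$ large enough, hence $\varliminf_{n\to\infty}G_q\ge 3\delta/5>0$ almost surely, which is the claim. (In fact one does not even need $A_n$ in full: it is enough that each class $(q)$ and $(q+1)$ with $q<Q_0$ is nonempty, and the same union bound applies.)

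The only point requiring a little care — and the place where the full Theorem \ref{theogaps} is genuinely harder — is the bookkeeping that the $Q_0-1$ inter-class gaps just constructed are pairwise distinct gaps of the sorted sequence; here this is immediate because on $B_n$ the classes occupy disjoint, correctly ordered intervals of the sorted degrees (Proposition \ref{propevent} with $\varepsilon=1$ gives exactly that two normalized degrees lie in the same class iff their distance is at most $2d_n<\tfrac{2\delta}{5}<\tfrac{3\delta}{5}$), so a gap separating class $(q)$ from $(q+1)$ cannot also separate class $(q')$ from $(q'+1)$ for $q'\ne q$. No quantitative control of which index realizes each gap is needed for the $\varliminf$ statement, which is why this weak version avoids the random-index complication of Theorem \ref{theogaps}.
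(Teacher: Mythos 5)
Your proof is correct and follows essentially the same route as the paper's: both arguments work on the event $B_n=A_n\cap\{d_n\le\delta/5\}$, deduce the deterministic bound $G_q\ge\delta-2d_n\ge\tfrac35\delta$ for all $q<Q_0$ on that event, and conclude via the summability of $P(\overline{B}_n)$ and Borel--Cantelli. The only cosmetic difference is that you count the $Q_0-1$ inter-class gaps directly to bound $G_{Q_0-1}$ from below, whereas the paper invokes Proposition \ref{propevent} to identify the $Q_0-1$ largest gaps as inter-class gaps and then bounds each one; the two are interchangeable here.
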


\begin{proof}
 If $q<Q_0$: on the event $B_n$, the $Q_0-1$ largest intervals  necessarily lie between normalized degrees from different classes. There exists $i\in\mathcal{C}_r$ and $j\in\mathcal{C}_s$, where $s\neq r$ such that $G_q=|T_i-T_j|$. But $|T_i - \overline{\pi}_r|\leq d_n$ and $|T_j - \overline{\pi}_s|\leq d_n$, hence
\[G_q\geq |\overline{\pi}_r - \overline{\pi}_s| - 2d_n \geq \delta - \frac25 \delta = \frac35\delta >0\]
Namely $B_n \subset \{G_q \geq \frac35\delta\}$. 
\[ P\left( G_q< \frac35 \delta \right) \leq P\left(\overline{B}_n\right)\leq 2e^{-\frac{2}{25}n\delta^2} + Q_0(1-\alpha_0)^{n} \]
As the upper bound is summable, according to the Borel-Cantelli lemma, \[P\left( \varlimsup_{n\to+\infty}\{G_q<\frac35 \delta\} \right) = 0\] Therefore $\varliminf_{n \to+\infty} G_q \geq \frac35\delta>0$ almost surely.

\end{proof}

All further gaps lie between degrees of nodes of the same class and then converge to zero. The next theorem gives an estimation of the convergence rate.

\begin{theorem} \label{theogaps2} 
 For all $\beta\in]0,1[$, the triangular array \[\{n^{\frac{1-\beta}{2}}G_q^n; Q_0\leq q\leq n-1\}\] converges uniformly w.r.t. $q$ and a.s. to zero when $n$ tends to infinity.
\end{theorem}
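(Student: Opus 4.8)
The plan is to reduce the statement to the single almost sure limit $n^{\frac{1-\beta}{2}}G_{Q_0}^n\to 0$ and then to extract that limit from the concentration bound on $d_n$ by a Borel--Cantelli argument with a shrinking threshold. The first step is a purely combinatorial observation: since $G_1^n\ge G_2^n\ge\dots\ge G_{n-1}^n$,
\[ \sup_{Q_0\le q\le n-1} n^{\frac{1-\beta}{2}}G_q^n \;=\; n^{\frac{1-\beta}{2}}G_{Q_0}^n , \]
so ``uniform convergence w.r.t.\ $q$ and a.s.\ to zero'' is exactly the a.s.\ convergence of this one sequence to $0$.

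Next I would localise the argument on the event $B_n=A_n\cap\{d_n\le\delta/5\}$ already used in this section. On $B_n$, Proposition \ref{propevent} (applied with $\varepsilon=1$) shows that the $Q_0-1$ largest gaps are precisely those separating consecutive true classes, so $G_{Q_0}^n,\dots,G_{n-1}^n$ all lie between two normalized degrees of the \emph{same} class; if $T_i$ and $T_j$ form such a pair with $i,j\in\mathcal{C}_r$, the triangle inequality gives $|T_i-T_j|\le|T_i-\overline{\pi}_r|+|T_j-\overline{\pi}_r|\le 2d_n$, hence $B_n\subset\{G_{Q_0}^n\le 2d_n\}$. Since, exactly as in the proof of Theorem \ref{theogapsweak}, $P(\overline B_n)\le 2e^{-\frac{2}{25}n\delta^2}+Q_0(1-\alpha_0)^n$ is summable, the Borel--Cantelli lemma gives that a.s.\ $B_n$ holds for all $n$ large enough; it therefore suffices to prove $n^{\frac{1-\beta}{2}}d_n\to 0$ a.s.

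For this last point I would fix $\varepsilon>0$ and apply Proposition \ref{propdispersion} with the vanishing threshold $t_n=\varepsilon\,n^{-\frac{1-\beta}{2}}$, which turns $P(d_n>t_n)\le 2n e^{-2nt_n^2}$ into $2n e^{-2\varepsilon^2 n^{\beta}}$. Because $\beta>0$ this is summable in $n$, so $P\big(\limsup_n\{n^{\frac{1-\beta}{2}}d_n>\varepsilon\}\big)=0$; letting $\varepsilon$ run along a sequence decreasing to $0$ yields $n^{\frac{1-\beta}{2}}d_n\to 0$ a.s., and on the almost sure event where moreover $B_n$ eventually holds we get $n^{\frac{1-\beta}{2}}G_{Q_0}^n\le 2n^{\frac{1-\beta}{2}}d_n\to 0$, which is the claim.

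I do not expect a genuine obstacle here: the argument is of exactly the same flavour as the ones already carried out in this section. The only point requiring a little care is the choice of scaling in the threshold $t_n$, made so that the Gaussian-type exponent $nt_n^2$ still diverges --- it becomes the power $n^{\beta}$, which is what keeps the tail bound summable; the hypothesis that really matters is $\beta>0$, while $\beta<1$ only serves to make the normalising factor $n^{(1-\beta)/2}$ a genuine blow-up.
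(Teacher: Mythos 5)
Your proposal is correct and follows essentially the same route as the paper: reduce to the single gap $G_{Q_0}^n$ by monotonicity, bound $G_{Q_0}^n\le 2d_n$ on the event $B_n$, and control both $P(\overline{B}_n)$ and $P(d_n>t\,n^{-\frac{1-\beta}{2}})$ by the same summable exponential bounds. You merely make explicit the Borel--Cantelli step that the paper leaves implicit.
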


\begin{proof}

First of all, recall that for all $n$,
\[ G^n_{Q_0} \geq G^n_{Q_0+1} \geq \dots \geq G^n_{n-1}\geq 0\]
Therefore we can just prove that $n^{\frac{1-\beta}{2}}G_{Q_0}\xrightarrow[n\to+\infty]{} 0$, and the uniform convergence will follow.

On the event $B_n$, the $Q_0-1$ largest intervals lie between normalized degrees from different classes. The next intervals lie between degrees from the same class, and the distance to their corresponding conditional mean is at most $d_n$. As $G_{Q_0}$ is one of these, $G_{Q_0}\leq 2d_n$. Hence, for all $0<t<\frac{\delta}{5}$:
\begin{align*}
 P\left(n^{\frac{1-\beta}{2}}G_{Q_0}>t \right) &= P(n^{\frac{1-\beta}{2}}G_{Q_0}>t \cap B_n) + P\left( n^{\frac{1-\beta}{2}}G_{Q_0}>t \cap \overline{B}_n \right) \\
&\leq P(2n^{\frac{1-\beta}{2}}d_n>t) + P\left(\overline{B}_n \right) \\
&\leq 2(e^{-\frac12 n^{\beta}t^2} + e^{-\frac{2}{25}n\delta^2}) + Q_0(1-\alpha_0)^{n}
\end{align*}

\end{proof}

\subsection{Study of the intervals between estimated classes}

By distances between estimated classes, we mean distances between empirical averages of the normalized degrees of each class, provided by the LG algorithm. Define $m_q$ to be the average of the normalized degrees of the $q$-labeled class estimated by the algorithm: \[m_q=\frac{1}{N_q}\sum_{i\in \widehat{\mathcal{C}}_q} T_i \] The sequence of the gaps between consecutive averages $(m_{(q+1)}-m_{(q)})_{q\in [Q-1]}$ is sorted in order of decreasing length, just as the sequence of the gaps $(T_{(i+1)}-T_{(i)})_{i\in [n-1]}$ is in the previous paragraph. This new sequence is denoted by $(H^n_q)_{q\in [Q-1]}$. Of course it depends on the current $Q$, whereas $(G_q)_q$ does not.

When $Q=Q_0$, $H_q$ and $G_q$ are very close for all $q\leq Q_0-1$. On the contrary, when $Q<Q_0$, some of the $(H_q)_{q\in [Q_0-1]}$ stretch over several classes and include more than one of the $G_q$. As a result, there is at least one $q$ such that $H_q$ asymptotically differs from $G_q$.


\begin{theorem} \label{theogapsmeans}
\
 \begin{enumerate}
  \item If $Q=Q_0$, then $\sum\limits_{q=1}^{Q-1} ( H_q - G_q ) \xrightarrow[n\to+\infty]{a.s.} 0$
  \item If $Q<Q_0$, then $\varliminf\limits_{n\to+\infty} \sum\limits_{q=1}^{Q-1} (H_q - G_q)  > 0 $ a.s.
 \end{enumerate}
\end{theorem}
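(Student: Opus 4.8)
The two claims are about the sum $\sum_{q=1}^{Q-1}(H_q - G_q)$, so the natural strategy is to relate this sum to a telescoping quantity that is easy to analyze on the good event $B_n = A_n \cap \{d_n \le \delta/5\}$. The key observation is that the sum of all gaps telescopes: $\sum_{q=1}^{n-1} G_q^n = \sum_{i=1}^{n-1}(T_{(i+1)}-T_{(i)}) = T_{(n)} - T_{(1)}$, and similarly $\sum_{q=1}^{Q-1} H_q^n = m_{(Q)} - m_{(1)}$ (again a telescoping sum, over the $Q$ estimated class means sorted in increasing order). So I would rewrite
\[
\sum_{q=1}^{Q-1}(H_q - G_q) = \bigl(m_{(Q)} - m_{(1)}\bigr) - \sum_{q=1}^{Q-1} G_q^n = \bigl(m_{(Q)} - m_{(1)}\bigr) - \bigl(T_{(n)} - T_{(1)}\bigr) + \sum_{q=Q}^{n-1} G_q^n.
\]
On $B_n$, for each estimated class the empirical mean $m_{(q)}$ sits within $d_n$ of the corresponding $\overline{\pi}_{(q)}$ (since on $B_n$ the LG algorithm recovers the true partition when $Q = Q_0$, and more generally the estimated classes are unions of true classes when $Q \le Q_0$), and likewise $T_{(1)}$ and $T_{(n)}$ are within $d_n$ of $\overline{\pi}_{(1)}$ and $\overline{\pi}_{(Q_0)}$.

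\textbf{Case $Q = Q_0$.} Here on $B_n$ the algorithm returns the true partition, so $m_{(q)}$ is the empirical average over the true $(q)$-th class, hence $|m_{(q)} - \overline{\pi}_{(q)}| \le d_n$ for each $q$. Then $|m_{(Q_0)} - m_{(1)} - (\overline{\pi}_{(Q_0)} - \overline{\pi}_{(1)})| \le 2d_n$ and $|T_{(n)} - T_{(1)} - (\overline{\pi}_{(Q_0)} - \overline{\pi}_{(1)})| \le 2d_n$, so the first two grouped terms above cancel up to $4d_n$. The remaining term $\sum_{q=Q_0}^{n-1} G_q^n$ is exactly $(T_{(n)} - T_{(1)}) - \sum_{q=1}^{Q_0-1} G_q^n$, the total length not covered by the $Q_0-1$ largest gaps; on $B_n$ these remaining $n - Q_0$ gaps are all intraclass, each bounded by $2d_n$, but summing $n$ of them does not immediately give a vanishing bound. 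Instead I would argue directly: $\sum_{q=Q_0}^{n-1} G_q^n$ equals the sum over each true class $q$ of $(T_{(\max \mathcal C_{(q)})} - T_{(\min \mathcal C_{(q)})})$, which is at most $2d_n$ per class, so the whole tail sum is at most $2Q_0 d_n$. Combining, on $B_n$ we get $\bigl|\sum_{q=1}^{Q_0-1}(H_q - G_q)\bigr| \le (4 + 2Q_0) d_n$, and since $d_n \to 0$ a.s. (the Remark after Proposition \ref{propdispersion}) and $P(\overline{B}_n)$ is summable (as in Theorem \ref{theogapsweak}), Borel--Cantelli gives a.s. convergence to $0$.

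\textbf{Case $Q < Q_0$.} Now on $B_n$ the $Q-1$ largest gaps in the mean-sequence still separate the estimated classes, and each estimated class is a union of consecutive true classes; in particular at least one estimated class is a union of $\ge 2$ true classes. Using $\sum_{q=1}^{Q-1} H_q = m_{(Q)} - m_{(1)}$ versus $\sum_{q=1}^{Q-1} G_q$: the $Q-1$ largest $G_q$ among the $Q_0-1$ interclass gaps still converge to the $Q-1$ largest theoretical gaps $\gamma_1, \dots, \gamma_{Q-1}$ (Theorem \ref{theogaps}), while $H_q \to$ the gap between the merged blocks, which is a sum of $\ge 1$ theoretical gaps; the total $\sum H_q = m_{(Q)} - m_{(1)} \to \overline{\pi}_{(Q_0)} - \overline{\pi}_{(1)} = \sum_{q=1}^{Q_0-1}\gamma_q$ (sum of \emph{all} theoretical gaps), whereas $\sum_{q=1}^{Q-1} G_q \to \sum_{q=1}^{Q-1}\gamma_q$ (sum of the $Q-1$ \emph{largest}). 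Hence $\sum_{q=1}^{Q-1}(H_q - G_q) \to \sum_{q=Q}^{Q_0-1}\gamma_q > 0$ a.s., and $\varliminf$ of the sum is this strictly positive constant. The cleanest route is: on $B_n$, $m_{(Q)} - m_{(1)} \ge \overline{\pi}_{(Q_0)} - \overline{\pi}_{(1)} - 2d_n$, while $\sum_{q=1}^{Q-1} G_q \le \sum_{q=1}^{Q-1}\gamma_q + 2(Q-1)d_n$ (each $G_q$ is within $2d_n$ of some distinct $\gamma_j$, which follows from the argument in Theorem \ref{theogapsweak}), so $\sum(H_q - G_q) \ge \sum_{q=Q}^{Q_0-1}\gamma_q - 2Q_0 d_n \ge (Q_0 - Q)\delta - 2Q_0 d_n$, and taking $\varliminf$ with $d_n \to 0$ a.s. finishes it.

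\textbf{Main obstacle.} The delicate point is handling the tail sum $\sum_{q=Q}^{Q_0-1}$ or $\sum_{q=Q_0}^{n-1}$ of gaps: naively bounding each of the $\Theta(n)$ intraclass gaps by $2d_n$ gives an $O(n d_n)$ bound that does \emph{not} vanish. The fix, reorganizing this tail as a sum over classes of (max normalized degree in class) minus (min normalized degree in class), each controlled by $2d_n$, gives the needed $O(Q_0 d_n)$ bound; but it relies on knowing, on $B_n$, exactly which gaps are interclass and which are intraclass — which is precisely what Proposition \ref{propevent} (with $\varepsilon = 1$) guarantees, and which is also the reason the event $B_n$ is the right event to condition on. A secondary subtlety, needed to make the matching of empirical gaps $G_q$ with distinct theoretical gaps $\gamma_j$ rigorous (rather than merely asymptotic), is essentially the content of Theorem \ref{theogaps}, which I would invoke directly.
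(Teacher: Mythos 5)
Your treatment of the first case ($Q=Q_0$) is correct and is essentially the paper's own argument in different clothing: the telescoping identities $\sum_{q}H_q=m_{(Q)}-m_{(1)}$ and $\sum_{i}G_i=T_{(n)}-T_{(1)}$, together with the regrouping of the $n-Q_0$ intraclass gaps into $Q_0$ class spans each bounded by $2d_n$, is exactly the paper's covering of $[0,1[$ by the end intervals and the $Q_0-1$ largest gaps, up to the boundary terms. That part is fine.

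The case $Q<Q_0$ contains a genuine error. Your key inequality, $m_{(Q)}-m_{(1)}\ge \overline{\pi}_{(Q_0)}-\overline{\pi}_{(1)}-2d_n$ (equivalently, the claim that $\sum_{q=1}^{Q-1}H_q$ tends to the sum of \emph{all} theoretical gaps $\sum_{q=1}^{Q_0-1}\gamma_q$), is false whenever the first or last estimated class is a merger of several true classes: the centroid of a merged class is pulled a macroscopic distance inside its span, not to within $d_n$ of the extreme conditional mean. Concretely, take $Q_0=3$, $Q=2$, $\alpha=(1/3,1/3,1/3)$ and $\overline{\pi}=(0.1,\,0.45,\,0.9)$. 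On $B_n$ the largest gap separates class $3$ from classes $1\cup2$, so $m_{(1)}\to 0.275$, $m_{(2)}\to 0.9$, hence $H_1\to 0.625$ while $\overline{\pi}_{(3)}-\overline{\pi}_{(1)}=0.8$; your asserted limit $\sum_{q=Q}^{Q_0-1}\gamma_q=0.35$ for $\sum_{q=1}^{Q-1}(H_q-G_q)$ is likewise wrong, the true limit being $0.625-0.45=0.175$. The conclusion $\varliminf\sum_{q=1}^{Q-1}(H_q-G_q)>0$ does survive, but it needs a different argument, e.g.: on $B_n$ one has the exact decomposition $\sum_{q=1}^{Q-1}(H_q-G_q)=\bigl(\max_{i\in\widehat{\mathcal{C}}_{(1)}}T_i-m_{(1)}\bigr)+\bigl(m_{(Q)}-\min_{i\in\widehat{\mathcal{C}}_{(Q)}}T_i\bigr)+\sum_{q=2}^{Q-1}\mathrm{span}\bigl(\widehat{\mathcal{C}}_{(q)}\bigr)$, all terms nonnegative; at least one estimated class merges two true classes, and if it is an intermediate one its span is at least $\delta-2d_n$, while if it is the first (or last) one the corresponding boundary term is at least $(N_{(1)}/n)\,\delta-2d_n$, which is bounded below using the concentration of $N_{(1)}/n$ around $\alpha_{(1)}\ge\alpha_0>0$. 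For what it is worth, the paper's own proof of this case rests on the inequality $\sum_{q=Q}^{Q_0-1}G_q\le\sum_{q=1}^{Q-1}(H_q-G_q)$, which fails in the same example ($0.35\not\le 0.175$), so this is precisely the point where extra care is needed and where neither your argument nor the published one is airtight as written.
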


\begin{proof}
Let $(J_q)_{q\in [Q_0-1]}$ the $Q_0-1$ largest intervals between consecutive normalized degrees, hence for all $q$, $|J_q|=G_q$. Define also $J_0'=[0,\min_{i\in [n]} T_i[$ and $J_Q'=[\max_{ i\in [n]} T_i,1[$. The union of $J_0',J_1,\dots,J_{Q-1},J_Q'$ partially covers the interval $[0,1[$. These intervals are separated and the distance between the bounds of consecutive intervals is at most $2d_n$. As a result:
\[ 1- 2Q_0d_n \leq \sum_{q=1}^{Q_0-1} G_q +H_0 + H_Q \leq 1 = \sum_{q=0}^Q H_q \]

\begin{description}
 \item[$Q=Q_0$] Subtracting the right-hand side (which actually equals 1), we deduce from both previous inequalities that:
\[ -2Q_0d_n \leq \sum_{q=1}^{Q_0-1} (G_q - H_q) \leq 0 \]
The first assertion follows directly from this inequality; for all $t>0$:
\begin{align*}
 P\left( \left|\sum_{q=1}^{Q_0-1} (H_q - G_q) \right| > t \right)  &\leq P( 2Q_0d_n >t ) \\
& \leq 2\exp\left( -2n\left(\frac{t}{2Q_0}\right)^2 \right) = 2 \exp\left( -\frac1{2Q_0^2} n t^2 \right)
\end{align*}
  \item[$Q<Q_0$] Subtracting the right-hand side from the second inequality only yields this time:
\[ \sum_{q=Q}^{Q_0-1} G_q \leq \sum_{q=1}^{Q-1} (H_q - G_q) \]
But as shown in Theorem \ref{theogapsweak}, the lower limit of $G_q$ is non-negative for all $q\leq Q_0-1$. \emph{A fortiori}, the second assertion of the theorem \ref{theogapsmeans} stands as well.
\end{description}

\end{proof}

\subsection{Application to model selection}

The summed differences $\sum_{q=1}^{Q-1} (H_q - G_q)$  examined in the last paragraph have an interesting property regarding model selection: when $Q$ is the right number of classes, it converges to zero, and when $Q$ is too small, it converges to a non-negative value, because one of the $H_q$ does not match $G_q$. Thus this quantity measures the risk of underestimating the number of classes.

However, its minimization over all $Q\in\{2,\dots,n\}$ yields the unexpected solution $Q=n$, for all $Q_0$. Therefore we have to penalize overly small gaps between normalized degrees. We chose to use an \emph{ad hoc} penalty, that can be easily inferred from our previous study, in order to have a correct estimate of $Q_0$. Define for all $Q\in \{2,\dots,n\}$:

\[ f_Q = \sum_{q=1}^{Q-1} (H_q - G_q) + \frac{1}{n^{\frac{1-\beta}{2}}G_{Q-1}} \in [0,+\infty] \text{ where } \beta\in]0,1[. \]

\begin{theorem}
\
 \begin{enumerate}
  \item If $Q=Q_0$, then   $f_Q \xrightarrow[n\to+\infty]{a.s.} 0 $
  \item If $Q<Q_0$, then   $\varliminf\limits_{n\to+\infty} f_Q >0 $ a.s.
  \item If $Q>Q_0$, then $f_Q \xrightarrow[n\to+\infty]{a.s.} +\infty $
 \end{enumerate}
It follows that $\widehat{Q}=\arg\min\limits_{ 2 \leq Q \leq n } f_Q \xrightarrow[n\to+\infty]{} Q_0$ a.s.
\end{theorem}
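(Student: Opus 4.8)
The plan is to verify the three asymptotic statements about $f_Q$ and then read off the behaviour of $\widehat Q$. I would work throughout on the event $B_n=A_n\cap\{d_n\le\delta/5\}$ of the previous subsections: its complement satisfies $P(\overline{B}_n)\le 2e^{-\frac{2}{25}n\delta^2}+Q_0(1-\alpha_0)^n$, which is summable, so by Borel--Cantelli $B_n$ holds for all $n$ large enough almost surely, and moreover $d_n\to 0$ a.s.\ by the remark following Proposition~\ref{propdispersion}. The genuine difficulty is that the index set $\{2,\dots,n\}$ over which $f_Q$ is minimised grows with $n$, so the overfitting regime $Q>Q_0$ must be controlled \emph{uniformly} in $Q$, not merely for each fixed $Q$.

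The two cases $Q\le Q_0$ are routine. For $Q=Q_0$, the summed differences tend to $0$ a.s.\ by Theorem~\ref{theogapsmeans}(1), while $G_{Q_0-1}$ stays bounded below by $\frac35\delta>0$ a.s.\ by Theorem~\ref{theogapsweak}; since $\beta<1$ this forces $n^{\frac{1-\beta}{2}}G_{Q_0-1}\to+\infty$, so the penalty $1/(n^{\frac{1-\beta}{2}}G_{Q_0-1})\to 0$ and hence $f_{Q_0}\to 0$ a.s. For $Q<Q_0$, the penalty is non-negative and $\varliminf_n\sum_{q=1}^{Q-1}(H_q-G_q)>0$ a.s.\ by Theorem~\ref{theogapsmeans}(2), so $\varliminf_n f_Q>0$ a.s.; since there are only finitely many such $Q$, almost surely $\varliminf_n\min_{2\le Q<Q_0}f_Q>0$ as well.

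The case $Q>Q_0$ is the crux. First I would establish that on $B_n$ the summed differences stay $O(Q_0 d_n)$ \emph{uniformly in $Q\ge Q_0$}: there, the $Q_0-1$ largest normalised-degree gaps are exactly the inter-class gaps (each $\ge \delta-2d_n\ge\frac35\delta$, while every intra-class gap is $\le 2d_n\le\frac25\delta$), so $G_{Q_0},\dots,G_{n-1}$ are all intra-class and their sum is at most the combined width $\le 2Q_0 d_n$ of the $Q_0$ true classes; and since for $Q\ge Q_0$ every estimated block sits inside a single true class, $m_{(1)}$ and $m_{(Q)}$ lie within $d_n$ of $\overline{\pi}_{(1)}$ and $\overline{\pi}_{(Q_0)}$. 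Telescoping $\sum_{q=1}^{Q-1}H_q=m_{(Q)}-m_{(1)}$ and comparing with $\sum_{q=1}^{Q_0-1}G_q$ exactly as in the proof of Theorem~\ref{theogapsmeans} then gives $|\sum_{q=1}^{Q-1}(H_q-G_q)|\le c\,Q_0 d_n$ for an absolute constant $c$ and all $Q\in\{Q_0,\dots,n\}$. Second, since $Q-1\ge Q_0$, the penalty is at least $1/\eta_n$ where $\eta_n:=\sup_{Q_0\le q\le n-1}n^{\frac{1-\beta}{2}}G_q^n$, and $\eta_n\to 0$ a.s.\ by Theorem~\ref{theogaps2}. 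Hence $\inf_{Q_0<Q\le n}f_Q\ge 1/\eta_n-c\,Q_0 d_n\to+\infty$ a.s., which in particular yields $f_Q\to+\infty$ for every fixed $Q>Q_0$.

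Putting the three regimes together: on a probability-one event there are $\varepsilon_0>0$ and $n_0$ such that for $n\ge n_0$ one has $f_{Q_0}<\varepsilon_0$, $\min_{2\le Q<Q_0}f_Q>\varepsilon_0$ and $\inf_{Q_0<Q\le n}f_Q>\varepsilon_0$; therefore $\widehat Q=\arg\min_{2\le Q\le n}f_Q=Q_0$ for all $n\ge n_0$, i.e.\ $\widehat Q\to Q_0$ a.s. I expect the main obstacle to be precisely this uniformity in the overfitting case: one must guarantee that the penalty blows up faster than the summed differences can fall, \emph{simultaneously} for all $Q$ up to $n$. This is why the penalty is calibrated with the exponent $(1-\beta)/2$ of Theorem~\ref{theogaps2} (uniform decay of the small gaps) and why it is crucial that the \emph{total} length of all intra-class gaps remains $O(Q_0 d_n)$, no matter how many of them the algorithm produces.
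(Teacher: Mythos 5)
Your proof is correct and follows essentially the same route as the paper's: the cases $Q=Q_0$ and $Q<Q_0$ rely on Theorems \ref{theogapsmeans} and \ref{theogapsweak} exactly as in the text, and the overfitting case on the uniform (in $q$) a.s.\ convergence of Theorem \ref{theogaps2}. The only deviation is that for $Q>Q_0$ you prove a two-sided bound $\bigl|\sum_{q=1}^{Q-1}(H_q-G_q)\bigr|\leq c\,Q_0 d_n$ uniformly in $Q$, where the paper short-circuits this by observing that $\sum_{q=1}^{Q-1}H_q\geq 0$ and $\sum_{q=1}^{Q-1}G_q\leq 1$ already give the trivial lower bound $-1$, which suffices since the penalty diverges uniformly.
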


\begin{proof}

 \begin{description} 
  \item[If $Q=Q_0$] Applying Theorem \ref{theogapsmeans}, the sum $\sum_{q=1}^{Q-1} (H_q - G_q)$ converges a.s. to 0. According to Theorem \ref{theogapsweak}, $\varliminf\limits_{n\to+\infty} G_{Q_0-1} >0$ almost surely. Therefore:
\[ \frac{1}{n^{\frac{1-\beta}{2}}G_{Q-1}} \xrightarrow[n\to+\infty]{a.s.} 0,
 \text{ and then } f_Q \xrightarrow[n\to+\infty]{a.s.} 0
\]
  \item[If $Q<Q_0$] According to the second assertion of Theorem \ref{theogapsmeans}, the lower limit of the first term is non-negative. There is no change by adding the second term, because it is positive. Hence: \[\varliminf_{n\to+\infty} f_Q >0 \]
  \item[If $Q>Q_0$] The sum $\sum_{q=1}^{Q-1} H_q - G_q$ is lower bounded by -1 (notice that it is even positive), and according to the second assertion of Theorem \ref{theogaps2}, $(n^{\frac{1-\beta}{2}}G_{Q-1})_n$ uniformly converges to 0, as soon as $q\geq Q_0$. The last assertion follows.
 \end{description}

\end{proof}

\section{Conclusions}

Unlike most of the methods known thus far, the LG algorithm is able to process very large graphs. In fact it provides good results only for such graphs. However, in practice, the algorithm is efficient even for smaller graphs than theoretically expected. Moreover it is self-sufficient: it provides consistent methods for node clustering, parameter estimation and model selection. Lastly, this algorithm is free from any preliminary setting. Consequently there is need neither for any prior knowledge nor for multiple runnings of the algorithm. Thus it can quickly provide good initialization values for other algorithms which depend severely on them.

Above all, the LG algorithm performs every task using the degree data alone. As a conclusion, the degree data asymptotically includes the information needed to achieve all of the statistical inference in this model.

\appendix

\section{Concentration inequality for products of binomial distributed variables} \label{cctprodbinom}

\begin{prop}
 Let $X$ (respectively $Y$) be a sum of $n$ independent bernoulli distributed variables with parameter $p$, respectively $q$. Then for all $t>0$
$$ P\left(\left|\frac{XY}{n^2}-pq\right|>t\right) \leq 4\exp(-\frac{1}{2}nt^2)$$
\end{prop}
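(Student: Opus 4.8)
The plan is to reduce the statement to the one-dimensional concentration inequality \eqref{cctinq} by an elementary algebraic manipulation, with no need for any joint assumption on $X$ and $Y$. Write $a=X/n$ and $b=Y/n$, so that $a,b\in[0,1]$ and the quantity to control is $ab-pq$. The key identity is
\[ ab-pq = b(a-p) + p(b-q), \]
which, together with $|b|\le 1$ and $|p|\le 1$, yields the perturbation bound $|ab-pq|\le |a-p| + |b-q|$.

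From here the event $\{|ab-pq|>t\}$ is contained in $\{|a-p|>t/2\}\cup\{|b-q|>t/2\}$: indeed, if both $|a-p|\le t/2$ and $|b-q|\le t/2$ held, the inequality above would force $|ab-pq|\le t$. A union bound then reduces the problem to bounding $P(|X/n-p|>t/2)$ and $P(|Y/n-q|>t/2)$ separately. Since $X$ is a sum of $n$ i.i.d.\ Bernoulli$(p)$ variables, \eqref{cctinq} applied with threshold $t/2$ gives $P(|X/n-p|>t/2)\le 2e^{-2n(t/2)^2}=2e^{-nt^2/2}$, and likewise for $Y$ with parameter $q$. Adding the two contributions produces exactly $4e^{-nt^2/2}$, which is the claimed bound.

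I do not expect a real obstacle here; the argument is a textbook perturbation-plus-union-bound. The only points worth stating carefully are (i) that splitting the budget $t$ as $t/2+t/2$ is what yields the constant $\tfrac12$ in the exponent — any split works, but a balanced one is natural and optimal for the two symmetric terms — and (ii) that joint independence of $X$ and $Y$ is never invoked; only that each is individually a sum of $n$ i.i.d.\ Bernoulli variables. This last observation is exactly what makes the inequality applicable to correlated counts such as the class sizes appearing in the proof of Theorem~\ref{revealedconsistency}, where one takes $X=N_q$, $Y=N_r$, $p=\alpha_q$, $q=\alpha_r$ and $t=r_n/n^2$ to obtain the term $4\exp(-\tfrac12 r_n^2/n^3)$ used in the bound \eqref{bound}.
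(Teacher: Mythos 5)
Your proof is correct and is essentially identical to the paper's: the same decomposition $\frac{XY}{n^2}-pq=\bigl(\frac{X}{n}-p\bigr)\frac{Y}{n}+\bigl(\frac{Y}{n}-q\bigr)p$, the same bounds $\frac{Y}{n}\leq 1$ and $p\leq 1$, the same $t/2$ union bound, and the same application of \eqref{cctinq}. Your remark that joint independence of $X$ and $Y$ is never used is accurate and is indeed what makes the result applicable to the correlated counts $N_q,N_r$ in the paper.
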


\begin{proof}
 \begin{align*}
  P\left(\left|\frac{XY}{n^2}-pq\right|>t\right) &= P\left(\left|\left(\frac{X}{n}-p\right)\frac{Y}{n}+\left(\frac{Y}{n}-q\right)p\right|>t\right) \\
&\leq P\left(\left|\frac{X}{n}-p\right|\frac{Y}{n}>\frac{t}{2}\right)+ P\left(\left|\frac{Y}{n}-q\right|p >\frac{t}{2}\right) \\
&\leq P\left(\left|\frac{X}{n}-p\right|>\frac{t}{2}\right)+ P\left(\left|\frac{Y}{n}-q\right| >\frac{t}{2}\right) \\
&\leq 2\times2\exp\left(-2n\left(\frac{t}{2}\right)^2\right) = 4\exp(-\frac12 nt^2)
 \end{align*}
The last line is obtained by applying the usual concentration inequality \eqref{cctinq} to both $X$ and $Y$.
\end{proof}

With a similar proof, we prove that for all $t\in]0,1/4]$:

\[
 P\left( \left| \frac{X(X-1)}{2n^2} -\frac{\alpha^2}{2} \right| >t \right) \leq 4\exp\left( {-2nt^2} \right)
\]

\section{Separation of mixed classes\label{mixed}}


Suppose that there are $Q$ classes and $\overline{\pi}_q=\overline{\pi}_r$ for some $q$ and $r$. For the sake of simplicity, all other conditional averages are assumed to be pairwise distinct. The LG algorithm is supposed to be previously applied to the graph with the input parameter $Q-1$. Let us point out that the $Q-1$ groups returned by LG are asymptotically the true classes, except classes $q$ and $r$, which are mixed together in one group of nodes, denoted by $M$. We shall briefly explain a procedure to separate this group, using the concentration of some additional binomial variables, namely the number of common neighbors of each pair of nodes.

\paragraph{Notation.}
Define $\underline{\alpha}$ the diagonal matrix the diagonal coefficients of which are $(\alpha_q)_{q\in [Q]}$ and the bilinear map on $\mathbb{R}^Q$:
\[ \langle\cdot,\cdot\rangle_{\alpha}:\: (X,Y) \mapsto {}^t\! X \underline{\alpha}Y \]
which is a scalar product, as soon as $\alpha_q$ is non-negative for all $q$. $\|\cdot\|_{\alpha}$ denotes the associated norm.

\paragraph{}

For all pairs of nodes $(i,j)\in M\times M$, define \[ D_{ij} = \sum_{k\neq i,j} Y_{ijk}, \text{ where } Y_{ijk} = X_{ik}X_{jk}. \]
$Y_{ijk}$ is a Bernoulli distributed variable, that equals one if and only if $i$ \emph{and} $j$ are both connected to $k$. Its parameter conditionally depends on each class of nodes $i$ and $j$:
\begin{itemize}
 \item If $i$ and $j$ both belong to the $q$-labeled class: \[ P(Y_{ijk}=1|Z_i=Z_j=q) = \sum_{l=1} \alpha_l \pi_{ql}^2  = \|\pi_q\|_{\alpha}^2 \] where $\pi_q$ is the row vector $(\pi_{ql})_l$. Symmetrically, if they both belong to the $r$-labeled class, the parameter is $\|\pi_r\|_{\alpha}^2$.
  \item Otherwise, if they belong to distinct classes $q\neq r$: \[ P(Y_{ijk}=1|Z_i=q,Z_j=r) =\sum_{l=1}^Q \alpha_l\pi_{ql}\pi_{rl} = \langle\pi_q,\pi_r\rangle_{\alpha}\]
\end{itemize}

The behavior of the new variables $D_{ij}$ looks like that of the degrees; they once more quickly concentrate around their average value as a consequence of the concentration of binomial variables. There are three groups of node pairs, concentrating around $\|\pi_q\|_{\alpha}^2$, $\|\pi_r\|_{\alpha}^2$, or $\langle\pi_q,\pi_r\rangle_{\alpha}$. 

Suppose that $\|\pi_q\|_{\alpha}\leq \|\pi_r\|_{\alpha}$. Applying the Cauchy-Schwarz inequality, \[ 0\leq \langle\pi_q,\pi_r\rangle_{\alpha} \leq \|\pi_q\|_{\alpha}\|\pi_r\|_{\alpha} \leq \|\pi_r\|_{\alpha}^2 \]

The case of equality in the Cauchy-Schwarz inequality cannot arise; if it did, then $\pi_q$ and $\pi_r$ would be collinear vectors. Noting $c$ the constant of collinearity, we would get $\overline{\pi}_q = c\overline{\pi}_r$. But $\overline{\pi}_q$ and $\overline{\pi}_r$ are assumed to be equal in this section; hence $c=1$. $\pi_q$ and $\pi_r$ would be equal. This is not allowed by the model for identifiability reasons. The inequality is finally strict, which especially implies:
\[ 0\leq \langle\pi_q,\pi_r\rangle_{\alpha} < \|\pi_r\|_{\alpha}^2 \]

The furthest group to the right on the real line consequently contains only pairs of nodes of the same membership, which is sufficient to solve the mixing problem. We just have to extract this group from the other two by using the LG algorithm with $Q=2$ as input parameter. Define $W$ as the set of the pairs which are in this group, and $F$ as the set of nodes, which are involved in those pairs. Let $K$ be the graph defined by $(F,W)$. There are three cases:

\begin{itemize}
 \item If $\langle\pi_q,\pi_r\rangle_{\alpha} \leq \|\pi_q\|_{\alpha}\leq \|\pi_r\|_{\alpha}$ and $\|\pi_q\|_{\alpha}-\langle\pi_q,\pi_r\rangle_{\alpha} < \|\pi_r\|_{\alpha} - \|\pi_q\|_{\alpha}$, $K$ asymptotically forms one clique composed of all nodes from the $r$-labeled class. Hence we deduce that remaining nodes are from the $q$-labeled class.
 \item If $\langle\pi_q,\pi_r\rangle_{\alpha} < \|\pi_q\|_{\alpha} \leq \|\pi_r\|_{\alpha}$ and $\|\pi_q\|_{\alpha}-\langle\pi_q,\pi_r\rangle_{\alpha} > \|\pi_r\|_{\alpha} - \|\pi_q\|_{\alpha}$, then the graph $K$ has asymptotically two cliques: one formed by the nodes of class $q$ and the other one by the nodes of class $r$. If the equality holds in the second inequality, there is either one clique as in the first case or two, depending on the selected gap.
 \item If $\|\pi_q\|_{\alpha} < \langle\pi_q,\pi_r\rangle_{\alpha} < \|\pi_r\|_{\alpha} $, the gap between $\|\pi_q\|_{\alpha}$ and $\langle\pi_q,\pi_r\rangle_{\alpha}$ is necessarily strictly shorter than the one between $\langle\pi_q,\pi_r\rangle_{\alpha}$ and $\|\pi_r\|_{\alpha}$. Indeed this amounts to saying that $\|\pi_q-\pi_r\|^2>0$. Thus $K$ asymptotically forms one clique again. 
\end{itemize}

\section{Proof of Theorem \ref{theogaps}} \label{theogapsproof}

Let us define $(J_i)_{i\in [n]}$ the sequence of the intervals $[T_{(i)},T_{(i+1)}[$ sorted in order of decreasing length, hence for all $i\in [n]$, $|J_i|=G_i$. We suppose hereafter that the sequence $(\overline{\pi}_q)_q$ is sorted in increasing order: $\overline{\pi}_1 < \dots < \overline{\pi}_Q$.


\begin{proof}

On the event $B_n$, among the $Q_0-1$ largest intervals, we can associate with each $\overline{\pi}_q$ the only one lying between $\overline{\pi}_q$ and $\overline{\pi}_{q+1}$. Namely the only $J_i$ with $i\in[Q_0-1]$ such that $J_i \cap ]\overline{\pi}_q,\overline{\pi}_{q+1}[ \neq \varnothing $. $S(q)$ denotes the index in $[Q_0-1]$ corresponding to this unique interval.

Moreover, $s(q)$ denotes one of the indexes $s\in[Q_0-1]$ such that $\gamma_s= \overline{\pi}_{q+1}-\overline{\pi}_{q}$, chosen so that $s$ is injective. Let us point out that $S$ is a random permutation whereas $s$ is deterministic. In order to simplify notations, we silently make the deterministic index change $r=s(q)$. Thereby $(\gamma_q)_q$ still denotes the sequence $(\gamma_{s(q)})_q$, and $S$ the permutation $S\circ s^{-1}$.

Notice that on $B_n$ and especially when $d_n\leq \frac{\delta}{5}$:
\[ [\overline{\pi}_q+d_n, \overline{\pi}_{q+1}-d_n] \subset J_{S(q)} \subset [\overline{\pi}_q-d_n, \overline{\pi}_{q+1}+d_n] \]
\begin{equation} \text{Hence } |G_{S(q)}-\gamma_q|\leq 2d_n. \label{ineg} \end{equation}
\paragraph{1.} We first prove that the gap $G_{S(q)}$ converges to the theoretical gap $\gamma_q$. For all $t>0$:
\begin{align}
P(|G_{S(q)} - \gamma_q| >t) &= P(|G_{S(q)} - \gamma_q|>t \cap B_n ) + P(|G_{S(q)} - \gamma_q| >t \cap \overline{B}_n) \notag \\
&\leq P( 2d_n>t ) + P(\overline{B}_n) \notag \\
&\leq 2(e^{-\frac12 nt^2} + e^{ -\frac{2}{25} n \delta^2} ) +Q_0(1-\alpha_0)^{n} \label{ineg2}
\end{align}
\paragraph{2.} Secondly, none of the $Q_0-1$ largest intervals permute anymore expect for those having the same theoretical values. It follows from the inequality \eqref{ineg} that for all $q,r\in[Q_0-1]$,
\[ \gamma_q - \gamma_r -4d_n \leq G_{S(q)} - G_{S(r)} \leq \gamma_q - \gamma_r +4d_n \]
Define $\eta = \frac15 (\min_{q\in [Q]} (\gamma_q - \gamma_{q+1})\wedge\delta)$, a threshold designed to distinguish distances converging to one value from those converging to another. On the event $d_n\leq \eta$, the previous inequality yields:
\[ \gamma_q - \gamma_r - 4\eta \leq G_{S(q)} - G_{S(r)} \leq \gamma_q - \gamma_r +4\eta \]
\begin{itemize}
 \item If $\gamma_q-\gamma_r<0$, then $\gamma_q-\gamma_r+4\eta < 0$ is also true by the definition of $\eta$. As a result of the inequality just above, $G_{S(q)}-G_{S(r)}<0$.
\item If $\gamma_q-\gamma_r>0$, then $\gamma_q-\gamma_r-4\eta > 0$, and $G_{S(q)}-G_{S(r)}>0$.
\end{itemize}

If $(u_i)_{1\leq i\leq m}$ is a sequence, we write $i\sim_u j$ if and only if $u_i=u_j$. $\sim_u$ is an equivalence relation. Applying the Lemma \ref{lemme} stated and proved afterwards, if $d_n\leq \eta$, there exists $r\sim_{\gamma}q$ such that $q=S(r)$. Notice furthermore that the sequence $(\gamma_q)_{q\in [Q_0-1]}$ is constant on the $\sim_{\gamma}$-equivalence classes. The term $|G_q - \gamma_q|$ is necessarily in the sum $\sum_{r\sim q} |G_{S(r)} - \gamma_r|$. Finally, define 
\begin{align*}
 P(|G_q - \gamma_q| > t ) &= P(|G_q - \gamma_q| > t \cap B_n ) + P(|G_q - \gamma_q| > t \cap \overline{B}_n) \\
&\leq P\left(\sum_{r\sim q} |G_{S(r)} - \gamma_r|>t \right) + P(\overline{B}_n)  \\
&\leq \sum_{r\sim q} P\left(|G_{S(r)} - \gamma_r|> \frac{t}{Q_0}\right) + P(\overline{B}_n) \\ 
&\leq 2Q_0(e^{-\frac1{2Q_0^2} nt^2} + e^{ -\frac{2}{25} n \delta^2} ) + 2e^{-2n\eta^2} \text{ according to }\eqref{ineg2}.
\end{align*}
\end{proof}

\begin{lemma} \label{lemme}
 Let $(u_i)_{1\leq i\leq m},(v_i)_{1\leq i\leq m}$ be two real decreasing sequences. Let $p$ be the number of $\sim_u$-equivalence classes and $\sigma$ one permutation of $\{1,\dots,m\}$. We especially assume that for all $i,j\in\{1,\dots,m\}$,
\begin{itemize}
 \item $u_i < u_j \Rightarrow v_{\sigma(i)} < v_{\sigma(j)}$
 \item $u_i > u_j \Rightarrow v_{\sigma(i)} > v_{\sigma(j)}$
\end{itemize}
Then $\sigma = \sigma_1 \circ \dots \circ \sigma_p$ where the support of $\sigma_i$ is the $i^{th}$ $\sim_u$-equivalence class.

\end{lemma}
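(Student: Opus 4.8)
The plan is to prove that $\sigma$ stabilizes each $\sim_u$-equivalence class as a set, i.e. $\sigma(I_a)=I_a$; from this the factorization follows at once, since the classes are pairwise disjoint, so the restrictions of $\sigma$ to them commute. Since $(u_i)$ is decreasing, the $\sim_u$-classes are intervals of consecutive indices; enumerate them $I_1,\dots,I_p$ so that $u$ is constant on each $I_a$, strictly decreases when passing from $I_a$ to $I_{a+1}$, and every index in $I_a$ is smaller than every index in $I_{a+1}$.

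First I would translate the hypotheses into index relations. Because $(v_i)$ is decreasing, a strict inequality $v_k<v_\ell$ forces $k>\ell$. Hence, if $i\in I_a$ and $j\in I_b$ with $a<b$, then $u_i>u_j$, so by assumption $v_{\sigma(i)}>v_{\sigma(j)}$, and therefore $\sigma(i)<\sigma(j)$. In other words, $\sigma$ sends every index lying in an earlier block to a strictly smaller position than every index lying in a later block; equivalently $\max\sigma(I_a)<\min\sigma(I_b)$ whenever $a<b$.

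Next I would conclude $\sigma(I_a)=I_a$ by induction on $a$: the sets $\sigma(I_1),\dots,\sigma(I_p)$ partition $\{1,\dots,m\}$, have the cardinalities $|I_1|,\dots,|I_p|$, and are linearly ordered by the previous step, so $\sigma(I_1)$ must be the initial segment $\{1,\dots,|I_1|\}=I_1$, then $\sigma(I_2)=I_2$, and so on. Letting $\sigma_a$ be the permutation agreeing with $\sigma$ on $I_a$ and fixing every other index (well defined by the above), the $\sigma_a$ have pairwise disjoint supports contained in the $I_a$, hence commute, and $\sigma=\sigma_1\circ\dots\circ\sigma_p$ as claimed.

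There is no genuine obstacle here; the only points requiring a little care are that a weak monotonicity of $(v_i)$ combined with the strict inequality $v_{\sigma(i)}>v_{\sigma(j)}$ is exactly what yields $\sigma(i)<\sigma(j)$, and that the $\sim_u$-classes being blocks of consecutive indices is what lets an ordered partition of $\{1,\dots,m\}$ into sets of prescribed sizes be pinned down uniquely.
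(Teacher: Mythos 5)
Your proof is correct and follows essentially the same route as the paper's: both identify the $\sim_u$-classes as blocks of consecutive indices, use the strict inequalities together with the (weak) monotonicity of $v$ to show that $\sigma$ preserves the ordering of the blocks, and conclude that $\sigma$ stabilizes each block, yielding the factorization. The only difference is presentational (the paper tracks the block boundaries via an explicit recursive sequence of indices $r_i$), so nothing further is needed.
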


\begin{proof}
Since $u$ is decreasing, the $\sim_u$-equivalence classes are just sets of consecutive natural integers. Define recursively $(r_i)_{1\leq i\leq p}$ the increasing sequence of indexes $j$ when the value of $u_j$ changes:
\begin{itemize}
 \item Let $r_1=1$.
  \item For $i\geq 1$, let $r_{i+1}$ be the smallest integer $j > r_i$ such that $u_{r_i}=\dots=u_{j-1} > u_j$.
\end{itemize}
The construction of $(r_i)_i$ implies that for all $j < r_i$, all $ r_i \leq l< r_{i+1}$ and all $k \geq r_{i+1}$: $ u_j < u_k <u_l$, and furthermore $v_{\sigma(j)}< v_{\sigma(k)} < v_{\sigma(l)}$ as well. As $v$ decreases, $\sigma(\{r_i,\dots,r_{i+1}-1\})=\{r_i,\dots,r_{i+1}-1\}$. The result follows directly from this.

\end{proof}

\newpage

\bibliographystyle{plainnat}
\bibliography{bibliographie}

\end{document}